\documentclass[12pt]{article}
\usepackage[utf8]{inputenc}
\usepackage{amsmath}
\usepackage[T2A]{fontenc}
\usepackage[english]{babel}
\usepackage{hyphenat}
\usepackage{floatrow}

\usepackage{graphicx}
\graphicspath{ {./Images/} }
\usepackage{array}
\usepackage{caption}
\usepackage{amsfonts, amsthm, xcolor}
\usepackage{url}
\usepackage[margin=1in, total={8.5in, 11in}]{geometry}
\floatsetup[table]{capposition=TOP}
\usepackage{tikz}

\newcommand{\Z}{\mathbb{Z}}
\newcommand{\F}{\mathcal{F}}
\newcommand{\M}{\mathcal{M}}
\newcommand{\Sf}{\mathcal{S}}
\newcommand{\E}{\mathbb{E}}
\newcommand{\Pp}{\mathbb{P}}
\newcommand{\m}{\mathcal}
\newcommand{\ff}{\mathcal{F}}

\title{Satisfying sequences for rainbow partite matchings}
\author{Andrey Kupavskii\footnote{Moscow Institute of Physics and Technology, Russia, St. Petersburg State University; Email: {\tt kupavskii@ya.ru}}, Elizaveta Popova\footnote{Weizmann Institute of Science, Rehovot, Israel; Email: {\tt elizaveta.popova@weizmann.ac.il}}}

\newtheorem{thm}{Theorem}
\newtheorem{lem}[thm]{Lemma}

\newtheorem{pro}{Problem}

\newtheorem{cla}[thm]{Claim}
\newtheorem{conj}{Conjecture}

\date{}

\begin{document}

\maketitle
\begin{abstract}
    Let $\mathcal F_1,\ldots, \mathcal F_s\subset [n]^k$ be a collection of $s$ families. In this paper, we address the following question: for which sequences $f_1,\ldots, f_s$ the conditions $|\ff_i|>f_i$ imply that the families contain a rainbow matching, that is, there are pairwise disjoint $F_1\in \ff_1,\ldots F_s\in \ff_s$? We call such sequences {\em satisfying}. Kiselev and the first author verified the conjecture of Aharoni and Howard and showed that $f_1 = \ldots = f_s=(s-1)n^{k-1}$ is satisfying for $s>470$. This is the best possible if the restriction is uniform over all families. However, it turns out that much more can be said about asymmetric restrictions. In this paper, we investigate this question in several regimes and in particular answer the questions asked by Kiselev and Kupavskii. We use a variety of methods, including concentration and anticoncentration results, spread approximations, and Combinatorial Nullstellenzats.
\end{abstract}
\section{Introduction}
Let $[n] = \{1,\ldots, n\}$ be the standard $n$-element set and, more generally, $[a,b] = \{a,a+1,\ldots, b\}$. For a set $X$, let $2^{X}$ and ${X\choose k}$ stand for the power set of $X$ and the set of all its $k$-element subsets, respectively. We denote by $[n]^k$ the collection of all tuples $(x_1,\ldots, x_k)$, where $x_i\in [n]$. That is, $[n]^k$ is a complete $k$-partite $k$-uniform hypergraph with parts of size $n.$

For an integer $s$, consider $s$ families  $\F_1,..., \F_s \subset [n]^k$. A {\it rainbow matching} or a {\it cross-matching} is a collection of $s$ sets $F_1\in \ff_1,\ldots, F_s\in \ff_s$ that are pairwise disjoint. If $\ff_1,\ldots, \ff_s$ do not contain a cross-matching then we call them {\it cross-dependent}. Note that if $s>n$, then the families are necessarily cross-dependent.

In this paper, we investigate, which lower bounds on the sizes of $\ff_i$ guarantee the existence of a cross-matching. We call a sequence $f_1,\ldots,f_s$ {\it satisfying} if the conditions $|\ff_i|>f_i$ for each $i\in[s]$ imply that the families $\ff_1,\ldots, \ff_s$ contain a rainbow matching. Note that this notion depends on $k$, but we omit it since $k$ is always clear from the context. Let $(f_1,\ldots, f_s),$ $(f_1',\ldots, f_s')$ be two sequences of $s$ real numbers, such that $f_i\le f_i'$ for all $i\in[s]$. In this case, we write $(f_1,\ldots, f_s)\preceq (f_1',\ldots, f_s')$. It is then straightforward to see that if $(f_1,\ldots, f_s)$ is satisfying, then $(f_1',\ldots, f_s')$ is satisfying as well. We shall sometimes write $\{f_i\}_i$ instead of $(f_1,\ldots, f_s)$. We shall write $f_i$ in increasing order.

One of the classical questions in extremal set theory is the Erd\H os Matching Conjecture \cite{E}. It suggests the size of the largest family in ${[n]\choose k}$ that contains no $s$ pairwise disjoint sets (an {\it $s$-matching}). In spite of the efforts by different researchers, it is still open in general. For the best current results, see \cite{F4}, \cite{FK21}, \cite{KK}. Its $k$-partite analogue for families $[n]^k$ states that any family of size strictly larger than $(s-1)n^{k-1}$ has an $s$-matching. Unlike for ${[n]\choose k},$ where it is a major open problem, the statement for $[n]^k$ is very easy to prove via averaging. The EMC has a multifamily version, and so does its $k$-partite analogue. Let us state the latter: given $s$ families $\ff_1,\ldots, \ff_s$ such that  $|\F_i| > (s-1)n^{k-1}$ for all $i \in [s]$, these families contain a cross-matching. This is known as the Aharoni--Howard conjecture \cite{AH}. Unlike the one-family version, this does not have an easy averaging proof and was open for some time, until Kiselev and the first author \cite{KKconc} resolved it for all $s>470$ (and any $n,k$).

The value  $(s-1)n^{k-1}$ in the Aharoni--Howard conjecture is optimal if we impose the same constraint on all families. However, it became apparent in the paper \cite{KKconc} that the problem has a certain asymmetric nature, and the resolution of the Aharoni--Howard conjecture relied on this asymmetricity. Kiselev and the first author introduced the notion of a satisfying sequence and noted that, in particular, $\{(i + C\sqrt{s\log s}) n^{k-1}\}_i$ for some absolute $C$ is satisfying and suggested doing a more thorough investigation of the matter.  In particular, they put forth the following conjectures.

\begin{conj}[\cite{KKconc}]
    The sequences $f_i = i n^{k-1}$ and $f_i = \min\{i + C\sqrt{s\log s}, s-1\} n^{k-1}$, $i=1,\ldots, s,$ are satisfying.
\end{conj}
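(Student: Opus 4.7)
The plan is to prove the two sequences separately: they are independent strengthenings of the Kiselev--Kupavskii sequence $\{(i+C\sqrt{s\log s})n^{k-1}\}_i$ and call for somewhat different mixes of the tools advertised in the abstract. The first, $f_i = in^{k-1}$, is the genuinely new result and I would attack it by combining spread approximation with Combinatorial Nullstellensatz; the second, $f_i = \min\{i + C\sqrt{s\log s},\, s-1\}n^{k-1}$, is a sharpening of the result in \cite{KKconc} in the regime where the Aharoni--Howard threshold already dominates, and I would deduce it by splitting the index range.

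For the first sequence I would sort $|\mathcal{F}_1| \le \cdots \le |\mathcal{F}_s|$ and run a spread-approximation induction. At each step I check whether some family $\mathcal{F}_i$ is \emph{concentrated} on a coordinate, i.e.\ there exist $j \in [k]$ and $v \in [n]$ such that a large fraction of $\mathcal{F}_i$ has $j$-th coordinate equal to $v$; if so, I restrict $\mathcal{F}_i$ to this sub-family and every other $\mathcal{F}_l$ to sets with $j$-th coordinate not equal to $v$ (losing at most $n^{k-1}$ sets each) and recurse on the reduced instance, in which one ambient coordinate has shrunk to $n-1$. If every family is spread, I pick $F_i \in \mathcal{F}_i$ uniformly and independently and bound $\mathbb{E}[\#\text{intersecting pairs}]$ via the spread hypothesis; a deletion or concentration argument in the spirit of \cite{KKconc} then produces a rainbow matching.

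For the second sequence the plan is to split at the index $i_0 = s-1-\lceil C\sqrt{s\log s}\rceil$. The top families, with $i > i_0$, have $|\mathcal{F}_i| > (s-1)n^{k-1}$, so the Aharoni--Howard theorem of \cite{KKconc} applied to this sub-collection yields a partial rainbow matching of size $s-i_0$. The bottom families, with $i \le i_0$, satisfy exactly the Kiselev--Kupavskii bound $|\mathcal{F}_i| > (i + C\sqrt{s\log s})n^{k-1}$, and a modification of their random-matching plus concentration argument---tracking the partial matching as already-fixed vertices---should extend the partial matching to size $s$.

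The main obstacle is the bookkeeping in the spread-approximation step: the condition $|\mathcal{F}_l| > ln^{k-1}$ has no slack, so after each restriction the new sizes $|\mathcal{F}_l'| \ge |\mathcal{F}_l| - n^{k-1}$ must still satisfy the sequence condition in the reduced ambient, which will require a carefully chosen potential function and a re-indexing of the families. In the boundary regime where the probabilistic argument struggles---the families are close to the tight example $\mathcal{F}_i = [i-1] \times [n]^{k-1}$---I expect to finish via Combinatorial Nullstellensatz applied to a polynomial of the form $P(\mathbf{x}) = \prod_{1 \le i < l \le s}\prod_{j=1}^{k}(x_{i,j}-x_{l,j})$, with variables $x_{i,j}$ over the coordinate supports of the $\mathcal{F}_i$; Alon's theorem then reduces the existence of a rainbow matching to a permanent-type calculation, establishing that the appropriate leading-monomial coefficient is nonzero.
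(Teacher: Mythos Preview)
The statement you are trying to prove is a \emph{conjecture}, and the paper does not prove it in full. In fact, the paper \emph{refutes} the second half: in Section~\ref{sec2} it exhibits cross-dependent families showing that $f_i = \min\{i + C\sqrt{s\log s},\, s-1\}n^{k-1}$ is \emph{not} satisfying once $k-1 \ge 3Cn\sqrt{\log s / s}$. So any purported proof of the second sequence, as stated without a constraint on $k$, must be wrong. Your extension step is exactly where it breaks: after fixing the partial matching from the top $s-i_0$ families you delete from each bottom family all sets meeting the chosen edges, and when $k/n$ is not small this deletion can wipe out essentially the whole family (a single fixed $k$-set meets $n^{k-1} - (n-1)^{k-1}$ members of a hyperplane, which for $k \asymp n$ is a constant fraction of $n^{k-1}$). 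The paper's positive result in this direction, Theorem~\ref{thmmain1}, therefore carries the hypothesis $k < \tfrac{Cn}{3\sqrt{s/\log s}}$, and its proof requires the anticoncentration machinery of Section~\ref{sec3} to handle the families that are ``stuck'' near the value $s-1$; the simple split-and-extend scheme you propose does not see this obstruction.

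For the first sequence $f_i = in^{k-1}$, the paper only establishes it for $n \ge \max\{2^{8}s^{3/2}\log_2^{3/2}(sk),\,8s^2\}$ (Theorem~\ref{thmmain2}), as a corollary of the sharper Theorem~\ref{thmmain3}. The full statement for all $n$ remains open; indeed the paper shows that $f_i = (i-\tfrac12)n^{k-1}-1$ is already not satisfying, so there is essentially no slack. Your ``spread-approximation induction'' is in the right spirit but is not the paper's argument: the paper does not recurse on a shrunk ambient cube, but instead builds for each $\mathcal{F}_i$ a spread approximation $\mathcal{S}_i$ consisting of sets of size $\le 2$, argues that the $\mathcal{S}_i$ are themselves cross-dependent (via the spread lemma and random colouring), and then bounds $|\mathcal{F}_i|$ structurally in terms of $|\mathcal{S}_i^{(1)}|$ and $|\mathcal{S}_i^{(2)}|$. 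Combinatorial Nullstellensatz plays no role in the proof of Theorem~\ref{thmmain2}; in the paper it is used only for the separate $k=2$ result (Theorem~\ref{thmmain4}), and the polynomial there is $\prod_{i<j}(x_j-x_i)^2$ over a degree-$2$ extension of $\mathbb{Z}_p$, not the product $\prod_{i<l}\prod_j (x_{i,j}-x_{l,j})$ you suggest.
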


In this paper, we investigate these conjectures. It turns out that the second sequence is not always satisfying, and we manage to delineate the regime in which it is. As for the first sequence, we show that it is satisfying for somewhat large $n$. At the same time, we give an example showing that the sequence $f_i = (i-1/2) n^{k-1}-1$ is not always satisfying. We note here that a naive guess could be that even the sequence $f_i=(i-1)n^{k-1}$ is satisfying, cf. \cite{KKconc}. However, in \cite{KKconc} the authors show that even the sequence $f_1 = \ldots = f_{s-1}=(s-1)n^{k-1}-(n-1)^{k-1}-1$ and $f_s = (s-1)n^{k-1}$ is not satisfying. For large $k$, this sequence has form $f_i = (s-1-o(1))n^{k-1}$.

\begin{thm}\label{thmmain2}
    For $n \geq \max\{2^{8}s^{3/2} \log_2^{3/2}(sk), 8 s^2\}$ the sequence $\{i\cdot n^{k-1}\}_i$ is satisfying.
\end{thm}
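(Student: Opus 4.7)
I would prove the theorem by induction on $s$, the base case $s=1$ being immediate since $|\mathcal{F}_1|>n^{k-1}\ge 1$ forces $\mathcal{F}_1$ to be nonempty. For the inductive step, the goal is to find $F_1\in\mathcal{F}_1$ such that the restricted families
\[
\mathcal{F}_i':=\{F\in\mathcal{F}_i:F\cap F_1=\emptyset\},\qquad i=2,\ldots,s,
\]
all satisfy $|\mathcal{F}_i'|>(i-1)(n-1)^{k-1}$. Once such an $F_1$ is secured, the $s-1$ families $\mathcal{F}_2',\dots,\mathcal{F}_s'$ live on a copy of $[n-1]^k$, and after re-indexing they meet the inductive hypothesis for parameters $(n-1,s-1)$. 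A routine check shows the quantitative hypothesis $n\ge\max\{2^{8}s^{3/2}\log_2^{3/2}(sk),\,8s^2\}$ is self-propagating under $(n,s)\mapsto(n-1,s-1)$, so the induction closes.

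To produce a good $F_1$, I sample $F_1$ uniformly at random from $[n]^k$ and bound, for each $i\ge 2$, the probability of the bad event $B_i:=\{Z_i<(i-1)(n-1)^{k-1}\}$, where $Z_i(F_1):=|\{F\in\mathcal{F}_i:F\cap F_1=\emptyset\}|$. Since $\E[Z_i]=|\mathcal{F}_i|(1-1/n)^k$, one checks that the margin
\[
\E[Z_i]-(i-1)(n-1)^{k-1}\ \ge\ \tfrac{(n-i)(n-1)^{k-1}}{n}
\]
is of order $n^{k-1}$ under $n\gg s$. The plan is then to upgrade this expectation estimate to an exponential concentration bound for $Z_i$, strong enough that a union bound over $i=2,\ldots,s$ is smaller than the density $\Pr[F_1\in\mathcal{F}_1]>1/n$, forcing $\mathcal{F}_1\setminus\bigcup_i B_i$ to be nonempty. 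The $s^{3/2}\log^{3/2}(sk)$ threshold should then emerge from matching an exponential tail bound (with a $\log(sk)$ factor from the union bound over families and the $1/n$ loss from conditioning on $F_1\in\mathcal{F}_1$) against the margin, while the alternative bound $n\ge 8s^2$ handles small-$s$ regimes via a cruder Chebyshev/second-moment fallback that bypasses the $\log(sk)$ factor.

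The hard part will be establishing the concentration for $Z_i$: the plain bounded-differences (McDiarmid) estimate is too weak, since a single coordinate change of $F_1$ may alter $Z_i$ by up to $n^{k-1}$, yielding a deviation of order $\sqrt{k}\,n^{k-1}$ that matches the margin itself. To sharpen it, I would combine a variance-aware Bernstein-type inequality with a spread/junta dichotomy (using the spread approximation tools referenced in the abstract): if some coordinate degree $d_{i,l}(a):=|\{F\in\mathcal{F}_i:F_l=a\}|$ is substantially larger than $|\mathcal{F}_i|/n$, then $\mathcal{F}_i$ is concentrated on a junta and one can either restrict $F_1$ to that junta or pre-peel it, recursing on a smaller sub-problem; on the complementary spread part, each coordinate's contribution to $Z_i$ has variance much smaller than its worst-case squared difference, so Bernstein's bound goes through uniformly under the union bound over $i=2,\ldots,s$.
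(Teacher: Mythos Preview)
Your inductive framework has a genuine gap that no amount of concentration sharpening will repair: the very existence of an $F_1\in\mathcal F_1$ with $|\mathcal F_i'|>(i-1)(n-1)^{k-1}$ for all $i\ge 2$ can fail, even under the hypotheses of the theorem. Here is an instance. Take $k$ large (say $k\ge n/2$, which is allowed since the hypothesis on $n$ is only polylogarithmic in $k$), put $G=(s+1,\dots,s+1)$, and set
\[
\mathcal F_i=\bigl([i]\times[n]^{k-1}\bigr)\cup\{G\}\quad(i\ge2),\qquad
\mathcal F_1=\bigl\{(a_1,\dots,a_k):a_1\in[s],\ \exists\,j\ge2\ a_j=s+1\bigr\}.
\]
Then $|\mathcal F_i|=i\,n^{k-1}+1$ for $i\ge2$, and $|\mathcal F_1|=s\bigl(n^{k-1}-(n-1)^{k-1}\bigr)>n^{k-1}$ once $k$ is of order $n/s$. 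Every $F_1\in\mathcal F_1$ has first coordinate in $[s]$ and meets $G$, so for $i=s$ (indeed for every $i\ge (F_1)_1$) one gets exactly $Z_i=(i-1)(n-1)^{k-1}$, and the strict inequality you need for the induction is impossible. Yet these families do admit a rainbow matching (e.g.\ $F_1=(1,s+1,1,\dots,1)$ and $F_i=(i,i,\dots,i)$ for $i\ge2$), so the theorem is not violated---only your reduction is. The ``restrict $F_1$ to the junta / pre-peel'' escape hatch does not help here: all the $\mathcal F_i$ share the same junta direction, and $\mathcal F_1$ is contained in it.

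This is why the paper does not argue by peeling off one set at a time. It proves a stronger statement (Theorem~\ref{thmmain3}) via spread approximations used globally: for each $\mathcal F_i$ one extracts a family $\mathcal S_i$ of sets of size $\le 2$ so that $\mathcal F_i\subset\bigcup_{S\in\mathcal S_i}[n]^k[S]\cup\mathcal F_i'$ with $|\mathcal F_i'|\le r^3n^{k-3}$, shows (via the spread lemma and a random $s$-colouring) that the $\mathcal S_i$ are themselves cross-dependent, and then runs a Hall-type argument on the $\mathcal S_i^{(1)}$ after bounding $|\mathcal S_i^{(2)}|$. The polylog dependence on $k$ comes from the spread lemma, not from any coordinate-wise concentration of a deletion count; your Bernstein route, even on a spread remainder, would leave a polynomial-in-$k$ factor and recover at best the easy $n\ge k^2s^2$ bound the paper proves separately in Section~\ref{sec4}.
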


We prove this result in Section~\ref{sec5} using the method of spread approximation, introduced in \cite{KuZa}. Since then, it has been used to obtain progress in several extremal set theory problems, see e.g. \cite{Kup54, Kup55, Kup56, KN}. Actually, for both theorems, there is  an easy argument proving them for $n \geq k^2 s^2$, see Section~\ref{sec4}.

\begin{thm}\label{thmmain1}
    The sequence $\{f_i\}_i$, $f_i = \min(s-1, i + C\sqrt{s\log s})n^{k-1}$ is satisfying for $C \geq 20$, $s > s_0(C)$ with some $s_0$ depending only on $C$, and $k<\frac{Cn}{3\sqrt{s/\log s}}$.
\end{thm}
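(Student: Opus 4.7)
}

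The plan is to use the probabilistic method with a uniformly random perfect matching of $[n]^k$ and reduce to Hall's theorem. Let $\pi_2, \dots, \pi_k$ be independent uniform random permutations of $[n]$, and set $M = \{(j, \pi_2(j), \dots, \pi_k(j)) : j \in [n]\}$, a random matching of size $n$. Form the bipartite incidence graph $G(M)$ between positions $[n]$ and families $[s]$, with edge $(j, i)$ iff $M_j \in \F_i$. A rainbow matching of $(\F_1, \dots, \F_s)$ exists as soon as $G(M)$ admits a matching saturating all $s$ families, which by Hall's theorem is equivalent to
\[
    |M \cap \F_U| \geq |U| \quad \text{for every } U \subseteq [s], \qquad \F_U = \bigcup_{i \in U}\F_i.
\]

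To exploit the slack in the sequence, order the families so that $|\F_1|\leq \cdots \leq |\F_s|$ and note that $\E[|M \cap \F_U|] = |\F_U|/n^{k-1} \geq |\F_{\max U}|/n^{k-1} > \min(s-1,\, |U| + C\sqrt{s\log s})$, since $|\F_{\max U}| > f_{\max U}$ and $\max U \geq |U|$. Hence the expectation exceeds the Hall threshold $|U|$ by at least $C\sqrt{s\log s}$ for every $U$ with $|U| \leq s - 1 - C\sqrt{s\log s}$. The core of the proof is then a concentration inequality for $|M \cap \F_U|$ as a function of the independent random permutations: since $|M \cap \F_U|$ changes by at most $2$ when a single value of some $\pi_\ell$ is altered, a martingale bound (Azuma--Hoeffding) or, sharper, Talagrand's convex-distance inequality for functions of permutations, yields a tail estimate of the form
\[
    \Pr\!\left[|M \cap \F_U| \leq \E[|M \cap \F_U|] - t\right] \leq \exp\!\left(-\Omega(t^2/(kn))\right),
\]
which, for $t = \Theta(C\sqrt{s\log s})$, should give a single-$U$ failure probability below $2^{-s}$ precisely under the hypothesis $k < Cn/(3\sqrt{s/\log s})$; a union bound over all $U \subseteq [s]$ then provides positive probability that every Hall condition in this bulk regime holds.

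The remaining case is the boundary regime $|U| > s - 1 - C\sqrt{s\log s}$, where $f_{\max U} = (s-1)n^{k-1}$ and the expected slack above $|U|$ can be smaller than $1$. Here $|\F_U^c| < n^{k-1}(n - s + 1)$ is small, and the complementary statistic $|M \cap \F_U^c|$ has expectation just below $n - |U|$. To rule out the bad event $|M \cap \F_U^c| \geq n - |U| + 1$, I would invoke an anticoncentration argument: a lower bound on the spread of this integer-valued statistic, using that the extremal value is attained only on a small measure of permutation tuples. Combining the bulk concentration with the boundary anticoncentration through the union bound furnishes a single matching $M$ satisfying every Hall condition, and the resulting system of distinct representatives in $G(M)$ is the desired rainbow matching.

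The main obstacle will be proving a sufficiently sharp concentration for $|M \cap \F_U|$ so that the tail beats the $2^s$-term union bound: plain McDiarmid applied to swaps in the product of the $k-1$ permutations produces an exponent scaling like $t^2/(kn)$, and it is precisely the hypothesis $k < Cn/(3\sqrt{s/\log s})$ that renders this strong enough for $C \geq 20$ and $s\geq s_0(C)$. Obtaining the right constants will likely require refining the concentration step, for instance via Talagrand's inequality with certificates whose sizes match the "degree" of each membership event, or by exploiting negative association of the underlying indicators $\mathbf{1}[M_j \in \F_U]$. The anticoncentration estimate at $|U| \approx s$, where the expected-value argument leaves essentially no room and one must use the cap part of the sequence to force $\F_s$ to almost fill $[n]^k$, is the other place where the proof is delicate.
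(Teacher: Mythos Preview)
Your plan has two genuine gaps. First, the concentration estimate $\exp(-\Omega(t^2/(kn)))$ coming from McDiarmid on the $(k-1)n$ transpositions is far too weak: under the hypothesis $k<\tfrac{Cn}{3\sqrt{s/\log s}}$ the quantity $kn$ may be of order $n^2\sqrt{\log s/s}$, so with $t=\Theta(C\sqrt{s\log s})$ the exponent $t^2/(kn)$ need not exceed $1$, let alone $s$. The concentration the paper uses (Theorem~\ref{conc} from \cite{KKconc}) is much sharper: it gives an exponent of order $\lambda^2/\mu$ with $\mu=\E|\F\cap\M|\le s$, \emph{independent of $k$ and $n$}; this is what makes $\lambda=\Theta(C\sqrt{s\log s})$ yield a failure probability $s^{-\Theta(C^2)}$. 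Second, even with this sharper bound, a union over all $2^s$ subsets $U\subset[s]$ cannot be afforded (you would need $s^{-\Theta(C^2)}<2^{-s}$). The paper avoids this entirely: it applies concentration only to the $s$ individual degrees $\deg(\F_i)=|\F_i\cap\M|$ (events $A_1,A_2$, union over $s$ terms), and then, for each matching, analyses a \emph{single} Hall-violating set $B$ via an averaging identity for the variables $\zeta_i=|\F_{s-M+i}\cap\M|-(s-1)$, obtaining inequality \eqref{eq1}.

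Your boundary regime is also where the real work lies, and ``an anticoncentration argument'' does not capture it. The paper proves a structural anticoncentration theorem (Theorem~\ref{antic}): if $\Pr[|\F\cap\M|\ne s-1]$ is very small then $\F$ is close to a union of $s-1$ parallel hyperplanes. It then splits into three cases: (i) the large families are this structured, and one finishes by a direct construction inside the common $s-1$ hyperplanes; (ii) $n\ge s^5$, where the spread-approximation machinery of Section~\ref{sec5} forces the same structure; (iii) the anticoncentration probability is at least $\tfrac{1}{4sn^2}$ and $n\le s^5$, where \eqref{eq1} yields a numerical contradiction for $C\ge 20$. Finally, the hypothesis $k<\tfrac{Cn}{3\sqrt{s/\log s}}$ is not what powers the concentration step; it is used in Case~1 to control $k(s-1)n^{k-2}$ against $\sqrt{s\log s}\,n^{k-1}$ when passing to the restricted cube $\m C'$.
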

In Section~\ref{sec2} we give an example showing that this bound on $k$ is optimal up to constants. The proof of this theorem combines three major ingredients. The first one is an adaptation of the original approach from \cite{KKconc} that is based on a certain concentration phenomenon (cf. Theorem~\ref{conc}) for the random variable $\zeta_\ff = |\ff\cap \mathcal M|$, where $\ff\subset [n]^k$ is a family under study, and $\mathcal M$ is a uniformly random perfect matching in $[n]^k$. The second ingredient is an anticoncentration result Theorem~\ref{antic},  presented in Section~\ref{sec3}, which gives a structural characterization of the families in $[n]^k$ for which the variable $\zeta_\ff$ is nearly-constant. The third one is the method of spread approximations.

We also study the case $k=2$ separately. For $k=2$, the Aharoni--Howard conjecture was proved by them in the original paper \cite{AH}. Here, we give another proof for prime $n$ based on Combinatorial Nullstellensatz. Besides the uniform sequence $f_1 = \ldots = f_s = (s-1)n,$ it gives many other sequences that correspond to non-zero coefficients of a certain polynomial. It appears that many of these sequences are far from being the best possible, but a more thorough investigation of this matter would be very interesting. We shall prove

\begin{thm}\label{thmmain4}
    Let $n = p$ be a prime and $k = 2$. Let $f_1, ..., f_s$ be such that the coefficient of $x_1^{f_1},..., x_s^{f_s}$ in $\prod_{1\leq i<j \leq s} (x_j - x_i)^{2}$ is non-zero modulo $p$. Then $p f_1, ..., p f_s$ is a satisfying sequence.
\end{thm}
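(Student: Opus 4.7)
My plan is to apply Alon's Combinatorial Nullstellensatz after first extracting from each family a combinatorially well-behaved sub-matching via pigeonhole. The starting observation is that $\mathbb{F}_p^2=[p]^2$ admits the partition into $p$ anti-diagonals $L_\beta=\{(a,a+\beta):a\in\mathbb{F}_p\}$, each of size $p$. Since $|\mathcal{F}_i|>pf_i$, pigeonhole yields for every $i$ a shift $\beta_i\in\mathbb{F}_p$ such that $A_i:=\{a\in\mathbb{F}_p:(a,a+\beta_i)\in\mathcal{F}_i\}$ satisfies $|A_i|>f_i$; equivalently, each $\mathcal{F}_i$ contains the ``slope-$1$'' sub-matching $M_i=\{(a,a+\beta_i):a\in A_i\}$. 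It then suffices to find $a_i\in A_i$ for which both the $a_i$ and the shifted values $a_i+\beta_i$ are pairwise distinct in $\mathbb{F}_p$, since then $\{(a_i,a_i+\beta_i)\}_{i=1}^s$ is a rainbow matching inside the $\mathcal{F}_i$.

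To realize such a choice I will consider the polynomial
\[
Q(x_1,\ldots,x_s)\;=\;\prod_{1\le i<j\le s}(x_j-x_i)\bigl((x_j+\beta_j)-(x_i+\beta_i)\bigr)\in\mathbb{F}_p[x_1,\ldots,x_s].
\]
By construction, $Q(a_1,\ldots,a_s)\ne 0$ iff both distinctness conditions hold, so any assignment $a_i\in A_i$ with $Q\ne 0$ certifies a cross-matching. Being a product of $s(s-1)$ linear forms, $Q$ has total degree $s(s-1)=\sum_i f_i$. The key algebraic point is that $(x_j+\beta_j)-(x_i+\beta_i)=(x_j-x_i)+(\beta_j-\beta_i)$, so expanding shows that the top total-degree homogeneous part of $Q$ is exactly $\prod_{i<j}(x_j-x_i)^2$, while all remaining terms of $Q$ have strictly smaller total degree. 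Hence the coefficient of any monomial of total degree $s(s-1)$ in $Q$ equals its coefficient in $\prod_{i<j}(x_j-x_i)^2$; in particular the coefficient of $\prod_i x_i^{f_i}$ in $Q$ is nonzero modulo $p$ by hypothesis.

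The conclusion then follows by a direct invocation of the Combinatorial Nullstellensatz applied to $Q$: $Q$ has total degree $\sum_i f_i$, the coefficient of $\prod_i x_i^{f_i}$ is nonzero in $\mathbb{F}_p$, and the sets $A_i\subseteq\mathbb{F}_p$ satisfy $|A_i|>f_i$, so there must exist $a_i\in A_i$ with $Q(a_1,\ldots,a_s)\ne 0$, which by the previous paragraph is exactly the desired rainbow matching. The main (really the only) subtlety is the perturbation argument identifying the top-degree part of $Q$ with $\prod_{i<j}(x_j-x_i)^2$, and this is immediate from expanding the shifted linear factors. Primality of $n$ enters both to make the Nullstellensatz available over the field $\mathbb{F}_p$ and to give the ``nonzero modulo $p$'' hypothesis its meaning; the pigeonhole/anti-diagonal step itself would work for any $n$.
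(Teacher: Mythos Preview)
Your proof is correct and takes a genuinely different route from the paper's. The paper works over the quadratic extension $\mathbb{F}_{p^2}=\mathbb{Z}_p(\alpha)$ with $\alpha^2$ a non-residue, identifies $[p]^2$ with $\mathbb{F}_{p^2}$ via $(i,j)\mapsto i+\alpha j$, and applies the Nullstellensatz to the degree-$ps(s-1)$ polynomial $P(x_1,\ldots,x_s)=\prod_{i<j}\prod_{q\in\mathbb{Z}_p}((x_j-x_i)^2-q)$ with the full families $\varphi(\mathcal F_i)$ as the evaluation sets; disjointness is then read off from the algebraic fact that $(\varphi(X)-\varphi(Y))^2\in\mathbb{Z}_p$ iff $X\cap Y\ne\emptyset$. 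You instead exploit the partition of $[p]^2$ into the $p$ ``slope-$1$'' matchings $L_\beta$, pigeonhole down to a single anti-diagonal per family, and apply the Nullstellensatz over $\mathbb{F}_p$ to the much smaller degree-$s(s-1)$ polynomial $Q=\prod_{i<j}(x_j-x_i)(x_j-x_i+\beta_j-\beta_i)$, whose top homogeneous part is exactly $\prod_{i<j}(x_j-x_i)^2$. Your argument is more elementary---no field extension, a polynomial of degree $s(s-1)$ rather than $ps(s-1)$---at the cost of an initial pigeonhole reduction that throws away most of each $\mathcal F_i$. The paper's approach keeps the families intact and encodes disjointness purely algebraically, which is more in the spirit of a direct Nullstellensatz proof; yours is cleaner to verify and shows that the extension to $\mathbb{F}_{p^2}$ is not essential.
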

The proof of this theorem is given in Section~\ref{sec6}.

\section{Examples of sequences that are not satisfying.}\label{sec2}

\begin{cla}
The sequence $(s-1)(n^{k-1}-(n-1)^{k-1})-1, (s-1)n^{k-1},\ldots, (s-1)n^{k-1}$ is not satisfying.
\end{cla}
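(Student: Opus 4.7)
The plan is to exhibit explicit families $\ff_1,\ldots,\ff_s\subset[n]^k$ meeting the strict size bounds yet containing no rainbow matching. I would pair a carefully chosen $\ff_1$ with the almost-extremal family $\{F:F_1\in[s-1]\}$ augmented by the single off-diagonal set $F^*=(s,s,\ldots,s)$:
\[
\ff_1 = \{F\in[n]^k : F_1\in[s-1],\ F_j=s \text{ for some } j\in[2,k]\},
\]
\[
\ff_2 = \cdots = \ff_s = \{F\in[n]^k : F_1\in[s-1]\}\cup\{F^*\}.
\]

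First I would verify the sizes. For $\ff_1$, fixing $F_1\in[s-1]$, the number of tuples $(F_2,\ldots,F_k)\in[n]^{k-1}$ containing the value $s$ is $n^{k-1}-(n-1)^{k-1}$, so $|\ff_1|=(s-1)(n^{k-1}-(n-1)^{k-1})$, which is strictly greater than $(s-1)(n^{k-1}-(n-1)^{k-1})-1$. For $i\geq 2$, since $F^*$ lies outside $\{F:F_1\in[s-1]\}$, one gets $|\ff_i|=(s-1)n^{k-1}+1>(s-1)n^{k-1}$, as required.

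Next I would rule out a rainbow matching by a dichotomy on $F^*$. Assume for contradiction that $F_1,\ldots,F_s$ are pairwise disjoint with $F_i\in\ff_i$. If no $F_i$ with $i\geq 2$ equals $F^*$, then $F_2,\ldots,F_s$ all lie in $\{F:F_1\in[s-1]\}$ and their $s-1$ pairwise distinct first coordinates must exhaust $[s-1]$; but $(F_1)_1\in[s-1]$ must differ from each of them, which is impossible, contradicting disjointness. If instead some $F_i=F^*$, then disjointness of $F_1$ from $F^*=(s,\ldots,s)$ forces $(F_1)_j\neq s$ for every $j\in[k]$, contradicting the defining property of $\ff_1$ that $(F_1)_j=s$ for some $j\geq 2$.

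There is no substantive obstacle here; the only design choice is to link $\ff_1$ to $F^*$ through the coordinates $[2,k]$. The quantity $(s-1)(n^{k-1}-(n-1)^{k-1})$ appearing in the statement is exactly the number of $F$ with $F_1\in[s-1]$ that share the value $s$ with $F^*$ at some later coordinate, so this pairing makes both branches of the dichotomy close simultaneously and yields the precise threshold in the claim.
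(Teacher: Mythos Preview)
Your proof is correct and is essentially the paper's construction: the paper fixes an arbitrary $F\in\{s\}\times[n]^{k-1}$ and sets $\ff_1=\{X\in[s-1]\times[n]^{k-1}:X\cap F\ne\emptyset\}$, $\ff_2=\cdots=\ff_s=[s-1]\times[n]^{k-1}\cup\{F\}$, which is exactly your example with the particular choice $F=F^*=(s,\ldots,s)$. The paper simply asserts the families are ``easily seen to be cross-dependent''; your dichotomy on whether $F^*$ is used is the right way to spell this out.
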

\begin{proof}
Consider the following sequence of families. Take $F\in \{s\}\times[n]^{k-1}$ and let
\begin{align*}\F_1 &= \{X\in [s-1]\times[n]^{k-1} : X\cap F\neq \emptyset\},\\
\F_2 = \ldots = \F_s &= [s-1]\times[n]^{k-1} \cup \{F\}
\end{align*}
The sequence of sizes is $(s-1)n^{k-1} - (s-1)(n-1)^{k-1}, (s-1)n^{k-1}+1, ..., (s-1)n^{k-1}+1$, and the families are easily seen to be cross-dependent.
\end{proof}
Analyzing the previous example, we get the following counterpart of Theorem~\ref{thmmain1}.
\begin{cla}
The sequence $\{f_i\}_i,$ where $f_i=\min(s-1, i + C\sqrt{s\log s})n^{k-1},$ is not satisfying for $s>s_0(C)$ and $k-1\ge 3Cn\sqrt{\frac{\log s}{s}}$.
\end{cla}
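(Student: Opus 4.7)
The plan is to reuse the construction from the previous claim and verify that in the stated regime its family sizes still dominate the entries of $\{f_i\}_i$. Fix $F\in\{s\}\times[n]^{k-1}$ and take $\F_1,\ldots,\F_s$ exactly as constructed there. The pigeonhole argument showing cross-dependency---one of $\F_2,\ldots,\F_s$ is forced to pick $F$ in order to cover the first coordinate $s$, and then every element of $\F_1$ collides with $F$---transfers verbatim, so only the size comparison needs attention.

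Sorted increasingly, the sizes are $|\F_1|=(s-1)(n^{k-1}-(n-1)^{k-1})$ and $|\F_j|=(s-1)n^{k-1}+1$ for $j\ge 2$. For $j\ge 2$ we have $f_j\le (s-1)n^{k-1}<|\F_j|$, so there is nothing to check. For $j=1$, I assume $s>s_0(C)$ is large enough that $1+C\sqrt{s\log s}<s-1$, so that $f_1=(1+C\sqrt{s\log s})n^{k-1}$; the desired inequality then reduces to
\[
(s-1)\bigl(1-(1-1/n)^{k-1}\bigr)\;>\;1+C\sqrt{s\log s}.
\]

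To deduce this from $k-1\ge 3Cn\sqrt{\log s/s}$, I use $(1-1/n)^{k-1}\le e^{-(k-1)/n}$ and set $y^*:=3C\sqrt{\log s/s}$, so by monotonicity the left-hand side is at least $(s-1)(1-e^{-y^*})$. After enlarging $s_0(C)$ if necessary one has $y^*<1$, hence $1-e^{-y^*}\ge y^*(1-y^*/2)$, and therefore
\[
(s-1)(1-e^{-y^*})\;\ge\;3C\sqrt{s\log s}\,(1-1/s)(1-y^*/2)\;=\;\bigl(3C-o(1)\bigr)\sqrt{s\log s}.
\]
Since $3C>C$, this expression exceeds $1+C\sqrt{s\log s}$ once $s$ is sufficiently large in terms of $C$, and the claim follows.

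The only step that requires any care is the Taylor estimate for $1-e^{-y^*}$, but the comfortable multiplicative slack between the constant $3$ in the hypothesis and the coefficient $1$ appearing in the $i=1$ entry of the sequence makes the asymptotic comparison straightforward; everything else is inherited from the preceding claim.
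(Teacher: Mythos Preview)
Your proof is correct and follows the same approach as the paper: both reuse the cross-dependent example from the previous claim, note that the inequalities $|\F_i|>f_i$ for $i\ge 2$ are immediate, and reduce the $i=1$ case to showing $(s-1)(1-e^{-\alpha})>1+C\sqrt{s\log s}$ for $\alpha=(k-1)/n\ge 3C\sqrt{\log s/s}$. The paper simply declares this last inequality a ``standard calculation,'' whereas you carry it out explicitly via the Taylor bound $1-e^{-y^*}\ge y^*(1-y^*/2)$.
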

Note that the bound on $k$ in Theorem~\ref{thmmain1} is the same up to a constant.
\begin{proof}
Take the example from the previous claim and compare the sizes with $f_i$.  Clearly, $|\F_i| > f_i$ for $i \in [2,s]$. Suppose that $k = \alpha n + 1$ for some $\alpha > 0$. Then $|\F_1| \geq (1-e^{-\alpha})(s-1)n^{k-1}$. Thus, as long as $(1-e^{-\alpha})(s-1)>1 + C\sqrt{s\log s}$, the example from the previous clam shows that the sequence $f_i$ is not satisfying. It is a standard calculation to show that this holds for, say, $\alpha>3C\sqrt{\frac{\log s}{s}}$ and $s>s_0(C).$
\end{proof}

The main result of Section~\ref{sec5} states that a sequence $\{f_i\}_i$,  $f_i = (i-1)n^{k-1} + 4(s-1)^2 n^{k-2} + 2^{15}s^3 \log_2^3(sk) n^{k-3}$ is satisfying for $n>2^{5}s\log_2(sk).$
If $n > 2^{15}s \log_2^3(sk)$, this implies that a sequence $\{f_i'\}_i$, $f_i' = (i-1)n^{k-1} + 5s^2 n^{k-2}$, is satisfying. In the next two claims we show the complementary bound.

\begin{cla}
    The sequence $g_1 = \lceil \frac s2\rceil \lfloor \frac s2\rfloor n^{k-2}-1, g_2 = \ldots
 =g_s = sn^{k-1} - \lceil \frac s2\rceil \lfloor \frac s2\rfloor n^{k-2}-1$ is not satisfying.
\end{cla}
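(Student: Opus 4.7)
The plan is to exhibit an explicit pair of ``extremal'' families witnessing that the sequence is not satisfying. The factor $\lceil s/2\rceil \lfloor s/2\rfloor$ in $g_1$ is exactly the maximum number of edges of a bipartite graph on $s$ vertices, while the complementary quantity $sn^{k-1} - \lceil s/2\rceil \lfloor s/2\rfloor n^{k-2}$ appearing in the other $g_i$ is the size of the union of two axis-aligned ``slabs'' whose intersection is a rectangle of size $\lceil s/2\rceil\lfloor s/2\rfloor$. This matches up suggestively and dictates the following construction.

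Fix disjoint sets $A,B\subset[n]$ with $|A|=\lceil s/2\rceil$ and $|B|=\lfloor s/2\rfloor$ (we may assume $n\ge s$, since otherwise $[n]^k$ admits no matching of size $s$ at all and the claim is vacuous). Define
\begin{align*}
\F_1 &= \{X\in[n]^k : X_1\in A \text{ and } X_2\in B\},\\
\F_i &= \{X\in[n]^k : X_1\in A \text{ or } X_2\in B\}, \qquad i=2,\ldots,s.
\end{align*}
Inclusion--exclusion gives $|\F_1|=\lceil s/2\rceil\lfloor s/2\rfloor n^{k-2}=g_1+1$ and
\[
|\F_i| = |A|n^{k-1} + |B|n^{k-1} - |A||B|n^{k-2} = sn^{k-1} - \lceil s/2\rceil\lfloor s/2\rfloor n^{k-2} = g_i+1,
\]
so all strict inequalities $|\F_i|>g_i$ hold.

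The substantive step is to verify that these families are cross-dependent. Suppose $M_1,\ldots,M_s$ is a rainbow matching; then for each coordinate $j$ the values $(M_1)_j,\ldots,(M_s)_j$ are pairwise distinct. Since $M_1\in\F_1$, we have $(M_1)_1\in A$ and $(M_1)_2\in B$. For every $i\ge 2$, membership $M_i\in\F_i$ forces $(M_i)_1\in A$ or $(M_i)_2\in B$. Setting $P=\{i\ge 2:(M_i)_1\in A\}$ and $Q=\{i\ge 2:(M_i)_2\in B\}$, we have $P\cup Q=\{2,\ldots,s\}$. The distinctness of first coordinates gives $|P|+1\le|A|=\lceil s/2\rceil$, and analogously $|Q|+1\le|B|=\lfloor s/2\rfloor$. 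Adding, $s-1=|P\cup Q|\le|P|+|Q|\le s-2$, a contradiction.

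The only real obstacle is spotting the correct construction; once the two families above are written down, cross-dependence reduces to a one-line pigeonhole on the rows indexed by $A$ and the columns indexed by $B$, and the size computation is immediate.
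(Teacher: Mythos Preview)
Your proof is correct and follows essentially the same construction as the paper: the paper takes $\F_1 = [s/2]\times[s/2]\times[n]^{k-2}$ and $\F_2=\cdots=\F_s = [s/2]\times[n]^{k-1}\cup[n]\times[s/2]\times[n]^{k-2}$, and proves cross-dependence by the same pigeonhole count (each set in $\F_1$ uses two of the $s$ distinguished elements in the first two parts, each other set uses at least one, so a rainbow matching would need $s+1$ of them). Your version is slightly more general in that you treat odd $s$ explicitly via $\lceil s/2\rceil,\lfloor s/2\rfloor$ (the paper only writes out the even case); the requirement that $A$ and $B$ be disjoint is unnecessary since they index different coordinates, but it is harmless.
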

\begin{proof}
Suppose for simplicity that $s$ is even. Put
\begin{align*}
    \F_1 &= [s/2]\times[s/2]\times[n]^{k-2},\\
    \F_2 =\ldots = \F_s &= [s/2]\times[n]^{k-1} \cup [n]\times[s/2]\times[n]^{k-2}.
\end{align*}
Note that each set from $\ff_1$ contains at least $2$ elements from the first $s/2$ elements in the first $2$ parts, and each set from $\ff_i, i>1$ contains at least $1$. Since there are $s$ elements in total, this implies by the pigeon-hole principle that these families are cross-dependent.

Their sequence of sizes is $\lceil \frac s2\rceil \lfloor \frac s2\rfloor n^{k-2}, sn^{k-1} - \lceil \frac s2\rceil \lfloor \frac s2\rfloor n^{k-2},...,sn^{k-1} - \lceil \frac s2\rceil \lfloor \frac s2\rfloor n^{k-2}$.
\end{proof}

\begin{cla}
    The sequence $\{g_i\}_i,$ $g_i=(i-1)n^{k-1} + \lceil \frac s2\rceil \lfloor \frac s2\rfloor n^{k-2} - 1,$ is not satisfying for $n\ge s^2/2$.
\end{cla}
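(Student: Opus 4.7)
The plan is to recycle the construction from the preceding claim verbatim; the only thing to check is that, under the hypothesis $n\ge s^2/2$, each family remains large enough to exceed the new thresholds $g_i$.

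Concretely, I would set
\[
\mathcal{F}_1 = [\lceil s/2\rceil]\times[\lfloor s/2\rfloor]\times[n]^{k-2}, \qquad \mathcal{F}_2 = \ldots = \mathcal{F}_s = [\lceil s/2\rceil]\times[n]^{k-1} \;\cup\; [n]\times[\lfloor s/2\rfloor]\times[n]^{k-2}.
\]
The cross-dependence argument from the previous claim is a pigeonhole on the first two coordinates and makes no reference to the exact sizes, so it transfers without change: any putative rainbow matching would contribute at least $2$ to the count ``first coord in $[\lceil s/2\rceil]$ plus second coord in $[\lfloor s/2\rfloor]$'' through $F_1$ and at least $1$ through each $F_i$ with $i\ge 2$, giving a total of at least $s+1$, exceeding the capacity $\lceil s/2\rceil + \lfloor s/2\rfloor = s$.

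What remains is a routine size check. By inclusion–exclusion one has $|\mathcal{F}_1| = \lceil s/2\rceil\lfloor s/2\rfloor\,n^{k-2}$, which is $g_1+1$, and $|\mathcal{F}_i| = s n^{k-1} - \lceil s/2\rceil\lfloor s/2\rfloor\,n^{k-2}$ for $i\ge 2$. Comparing the latter with $g_i$, the required inequality $|\mathcal{F}_i|>g_i$ rearranges to
\[
(s-i+1)\,n \;\ge\; 2\lceil s/2\rceil\lfloor s/2\rfloor,
\]
which is tightest at $i=s$, where it reads $n\ge 2\lceil s/2\rceil\lfloor s/2\rfloor$.

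The only ``obstacle'' is verifying this last inequality, and it is immediate: $2\lceil s/2\rceil\lfloor s/2\rfloor \le s^2/2$ for every integer $s\ge 1$ (with equality when $s$ is even), so the hypothesis $n\ge s^2/2$ is exactly tailored to this bound. In particular, no new combinatorial idea is required; the claim is simply pinpointing the precise threshold at which the construction from the previous claim continues to witness non-satisfiability against the stretched lower bounds $g_i$.
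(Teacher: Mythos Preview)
Your proposal is correct and follows exactly the approach of the paper: reuse the construction from the preceding claim and verify the size conditions, with the key inequality reducing to $n \ge 2\lceil s/2\rceil\lfloor s/2\rfloor \le s^2/2$. You spell out the size comparison and the pigeonhole for cross-dependence in a bit more detail than the paper (which simply says ``for $n \ge s^2/2$ we have $|\mathcal{F}_i| > g_i$''), but the argument is the same.
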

\begin{proof}
Take the example from the previous claim. Then always $|\F_1| > g_1$ and, moreover, for $n \ge \frac{s^2}{2}$ we have $|\F_i| > g_i$ for all $i \in [2,s]$.
\end{proof}
In particular, if $s$ is even and $ n = s^2/2,$ then the sequence of families from the previous example shows that the sequence $\{(i-1/2)n^{i-1}-1\}$ is not satisfying.

\section{Spread approximations: proof of Theorem~\ref{thmmain2}}\label{sec5}

Throughout this section, we think of our families residing on the ground set  $[k]\times[n]$. Accordingly, an element of the ground set is denoted $(j,a)\in [k]\times [n].$ The $k$-partite property of each family is then expressed as follows: each set from each family contains exactly $1$ element of the form $(j,a)$ for each $j\in[k].$ Let us introduce some notation, a part of which is somewhat non-standard. Given families $\ff,\m S$ and sets $X\subset Y,$ define
\begin{align*}
\ff(X)&:=\{F\setminus X:X\subset F, F\in \ff\},\\
\ff[X]&:= \{F: X\subset F, F\in \ff\}, \\
\ff(X, Y)&:= \{F\setminus X: F\cap Y = X\},\\
\ff(\m S)&:=  \bigcup_{A\in \m S}\ff(A),\\
\ff[\m S]&:=  \bigcup_{A\in \m S}\ff[A].
\end{align*}

The approach in this section is a variant of what is called the {\it spread approximation technique}, introduced by Zakharov and the first author in \cite{KuZa}. The key notion is that of an $r$-spread family. Given real $r>1$, a family $\ff$ is called {\it $r$-spread} if for any $X$ we have $|\ff(X)|\le r^{-|X|}|\ff|.$ We say that $W$ is a {\it $p$-random} subset of $[n]$ if each element of $[n]$ is included in $W$ independently and with probability $p.$

The spread approximation technique is based on the following theorem.
\begin{thm}[The spread lemma, \cite{Alw}, a sharpening due to \cite{Tao} and \cite{Sto}]\label{thmtao}
  If for some $n,k,r\ge 1$ a family $\ff\subset {[n]\choose \le k}$ is $r$-spread and $W$ is a $(\beta\delta)$-random subset of $[n]$, then \begin{equation}\label{eqtao}\Pr[\exists F\in \ff\ :\ F\subset W]\ge 1-\Big(\frac 2{\log_2(r\delta)} \Big)^\beta k.\end{equation}
\end{thm}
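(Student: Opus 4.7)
The plan is to follow the Alweiss--Lovett--Wu--Zhang framework for the spread lemma, as sharpened by Tao and Stoeckl. First I would couple the $(\beta\delta)$-random set $W$ with a union $W=W_1\cup\cdots\cup W_\beta$ of $\beta$ independent $\delta$-random subsets of $[n]$. The goal is then to argue iteratively that, with high probability, by the time all $\beta$ layers are revealed at least one $F\in\ff$ lies inside $W$.

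The key quantity to track is a potential $\phi_i$ measuring how far $W_1\cup\cdots\cup W_i$ is from covering some member of the family: concretely, $\phi_i=\min_{F\in\ff}|F\setminus(W_1\cup\cdots\cup W_i)|$. Initially $\phi_0\le k$, and if $\phi_\beta=0$ then some $F\in\ff$ sits inside $W$. The heart of the argument will be a one-step lemma: conditional on $W_1,\ldots,W_{i-1}$, the new layer $W_i$ should reduce the potential by at least a factor of two with conditional probability at least $1-2/\log_2(r\delta)$. I would prove this via a weighted counting argument, as follows. A fragment $G=F\setminus(W_1\cup\cdots\cup W_{i-1})$ of size $\ell$ is absorbed into $W_i$ with probability $\delta^\ell$, while the $r$-spread hypothesis limits the number of small fragments from above. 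Balancing these estimates at the cutoff $\phi_{i-1}/2$ is what extracts the precise constant $2/\log_2(r\delta)$ in the Tao--Stoeckl sharpening (at the level of a single step, this is essentially the observation that $\sum_{\ell\le t}\delta^\ell\cdot r^{-\ell}$ is geometric, and $t$ can be taken logarithmic in $r\delta$).

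To finish, I would iterate the halving: after at most $\lceil\log_2 k\rceil$ successful halvings the potential reaches $0$, so the event of failure requires the halving step to fail repeatedly. The probability that this happens $\beta$ times in a row is bounded by $(2/\log_2(r\delta))^\beta$ per starting height, and summing over the at most $k$ distinct values $\phi_i$ can visit yields the target failure probability $k\cdot(2/\log_2(r\delta))^\beta$. The main obstacle, in my view, is showing that the $r$-spread property is preserved under conditioning on the earlier layers: a priori, conditioning on $W_1,\ldots,W_{i-1}$ can damage spreadness, so one has to pass to a carefully chosen ``surviving'' subfamily of minimal fragments and verify that it inherits $r$-spreadness (possibly in a weighted or average sense) from $\ff$. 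Once that inheritance step is handled, the remainder is a routine iteration and union bound.
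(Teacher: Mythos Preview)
The paper does not prove Theorem~\ref{thmtao}: it is stated as a known result, attributed to \cite{Alw} with the sharpened constant from \cite{Tao} and \cite{Sto}, and then used as a black box in the spread-approximation arguments of Section~\ref{sec5}. There is therefore no proof in the paper to compare your proposal against.

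As for the sketch itself, the overall architecture (write $W$ as a union of $\beta$ independent $\delta$-random layers, establish a one-step contraction lemma, iterate) is the right one, and you correctly flag the inheritance of spreadness under conditioning as the real technical point. However, the specific one-step statement and the final accounting do not line up with the bound. The standard one-step lemma is a contraction \emph{in expectation}: for a suitably chosen surviving fragment one shows
\[
\E\big[\phi_i \,\big|\, W_1,\ldots,W_{i-1}\big] \le \frac{2}{\log_2(r\delta)}\,\phi_{i-1},
\]
and iterating gives $\E[\phi_\beta]\le k\cdot\big(2/\log_2(r\delta)\big)^\beta$; Markov's inequality then yields~\eqref{eqtao}, with the factor $k$ coming from $\phi_0\le k$. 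Your version instead asserts that $\phi_i\le \phi_{i-1}/2$ with conditional probability at least $1-2/\log_2(r\delta)$, and then tries to recover $k\cdot(2/\log_2(r\delta))^\beta$ by ``summing over the at most $k$ distinct values $\phi_i$ can visit.'' That step does not work: if each layer halves the potential with probability $1-p$, the event $\{\phi_\beta\ge 1\}$ is the event that fewer than $\lceil\log_2 k\rceil$ of the $\beta$ steps succeed, whose probability is a binomial tail rather than $k\cdot p^\beta$. So as written the sketch would not close; replacing the probabilistic halving by the expectation contraction (and Markov at the end) fixes it and matches the cited proofs.
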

Actually, Stoeckl~\cite{Sto} showed that we can take $1 + h_2(\delta)$ as the constant in the numerator, where $h_2(\delta) = - \delta \log_2 \delta - (1 - \delta) \log_2 (1 - \delta)$ is the binary entropy of $\delta$.

The main result of this section is as follows.
\begin{thm}\label{thmmain3}
    For $n>2^5 s\log_2(sk)$ the sequence $\{(i-1)n^{k-1} + 4(s-1)^2 n^{k-2} + 2^{15}s^3 \log_2^3(sk) n^{k-3}\}_i$ is satisfying.
\end{thm}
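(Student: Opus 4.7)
The plan is to use the spread approximation technique at scale
\[
r := \frac{n}{2^{5} s \log_2(sk)},
\]
which is at least $1$ by the hypothesis on $n$, with truncation size $q=3$. The cutoff at $q=3$ matches the three error terms in the statement: popular singletons, pairs, and triples of each family will account for $(i-1)n^{k-1}$, $4(s-1)^2 n^{k-2}$, and $2^{15}s^3\log_2^3(sk)\,n^{k-3}$ respectively, and anything below these layers will be handled by Theorem~\ref{thmtao}.

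\emph{Step 1: building the approximations.} For each $\F_i$, apply a layered peeling procedure --- extract successively popular singletons, pairs, and triples into a collection $\Sf_i$ and leave an $r$-spread residue $\m R_i\subseteq\F_i$. A standard counting argument for spread approximations, together with a careful choice of peeling thresholds, bounds the sizes $|\Sf_i^{(t)}|$ at each layer $t=1,2,3$; the thresholds are calibrated so that the total mass peeled off at level $t$ is at most the $t$-th error term in the statement.

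\emph{Step 2: the rainbow matching, by induction on $s$.} Order the families by increasing size and consider $\F_s$. If $\Sf_s^{(1)}$ contains a popular singleton $(j,a)$, I fix $(j,a)$ as a coordinate of $F_s$, restrict every other family to sets avoiding $(j,a)$ (losing at most $n^{k-1}$ from each $|\F_i|$), and invoke the inductive hypothesis with parameters $(n,s-1)$; the slack in the pair/triple terms is engineered to absorb the loss along both $s$ and $n$. Otherwise, $\F_s$ is essentially $r$-spread with $|\F_s|\ge(s-1)n^{k-1}$. In that case I fix a rainbow matching $F_1,\dots,F_{s-1}$ obtained inductively for the first $s-1$ families, leaving $n-s+1$ unused coordinates in each part, and extend to a full rainbow matching by a uniformly random transversal on the residual set. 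Theorem~\ref{thmtao} applied with $\beta\delta\asymp 1$ and $\log_2(r\delta)$ a large constant guarantees that, with positive probability, the extension contains a set of $\m R_s$ disjoint from $F_1,\dots,F_{s-1}$.

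\emph{Main obstacle.} The delicate part is the bookkeeping in Step 2: after each singleton-reduction the parameters $n$ and $s$ shift, and one must verify that the three-layer error bound still holds for the reduced problem. The natural approach is to maintain, as an invariant, that for every remaining family the slack $|\F_i|-(i-1)n^{k-1}$ decomposes across the three layers in the proportions prescribed by the statement; the constants $4$ in $4(s-1)^2$ and $2^{15}$ in $2^{15}s^3\log_2^3(sk)$ are calibrated precisely to close this invariant and to supply the $\log_2(r\delta)$ slack needed to drive the failure probability in Theorem~\ref{thmtao} below $1$.
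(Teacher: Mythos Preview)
Your inductive scheme has a genuine gap that cannot be repaired by ``calibration''. In the singleton case you remove the hyperplane $\m H_j(a)$ from each $\ff_1,\ldots,\ff_{s-1}$, losing up to $n^{k-1}$ from every $|\ff_i|$, and then invoke the hypothesis with parameters $(n,s-1)$. For this to close you would need
\[
|\ff_i| - n^{k-1} \;>\; (i-1)n^{k-1} + 4(s-2)^2 n^{k-2} + \cdots,
\]
i.e.\ the $s\to s-1$ slack $\big(4(s-1)^2-4(s-2)^2\big)n^{k-2}=(8s-12)n^{k-2}$ must absorb a loss of $n^{k-1}$. This forces $n<8s-12$, which contradicts the standing hypothesis $n>2^5 s\log_2(sk)$. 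In particular, for $i=1$ the restricted family $\ff_1'$ can simply be empty. The difficulty you flag as ``delicate bookkeeping'' is in fact an arithmetic obstruction, not a matter of constants.

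There is also a mismatch with the spread lemma: a uniformly random transversal of the residual cube is not a $p$-random subset of $[k]\times[n]$ (coordinates within a part are negatively correlated), so Theorem~\ref{thmtao} does not apply directly in your extension step. And note that your value $r=n/(2^5 s\log_2(sk))$ is not the one that produces the term $2^{15}s^3\log_2^3(sk)\,n^{k-3}$: that term equals $r^3 n^{k-3}$ for $r=2^5 s\log_2(sk)$, which is the \emph{residue} bound when the approximation is truncated at size~$2$ (not a third peeled layer).

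The paper's argument is not inductive at all. It takes $r=2^5 s\log_2(sk)$, builds for each $\ff_i$ a spread approximation $\Sf_i$ consisting of sets of size at most~$2$, and first proves (via an $s$-colouring and Theorem~\ref{thmtao}) that the $\Sf_i$ themselves are cross-dependent. It then performs a combinatorial cleanup on the $\Sf_i$ --- moving high-degree elements of $\Sf_i^{(2)}$ into $\Sf_i^{(1)}$ and replacing any $\Sf_i$ with $|\Sf_i^{(2)}|>4(s-1)^2$ by $\{\emptyset\}$ --- all while preserving cross-dependence. Finally a Hall-type argument on the cleaned $\Sf_i$ forces some index $i$ with $\emptyset\notin\Sf_i$ and $|\Sf_i^{(1)}|\le i-1$, which yields $|\ff_i|\le (i-1)n^{k-1}+4(s-1)^2 n^{k-2}+r^3 n^{k-3}$ directly. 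The three terms in the statement are thus the singleton layer, the pair layer, and the residue, with no induction and no loss of $n^{k-1}$ anywhere.
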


Theorem~\ref{thmmain2} is an easy corollary: for $n \geq \max\{2^{8}s^{3/2} \log_2^{3/2}(sk), 8 s^2\}$ we have $\{(i-1)n^{k-1} + 4(s-1)^2 n^{k-2} + 2^{15}s^3 \log_2^3(sk) n^{k-3}\}_i \preceq \{i\cdot n^{k-1}\}_i$, and thus the latter sequence is satisfying as well.

\begin{proof}[Proof of Theorem~\ref{thmmain3}]
Take cross-dependent families $\ff_1,\ldots, \ff_s\subset [n]^k$. Put $r= 2^{5} s\log_2(sk)$. In what follows, it will be convenient for us to denote by $\m A$ the family  $[n]^k$.

The first step of the proof is to construct spread approximations $\m S_i$ of $\ff_i$. Actually, $\m S_i$ will only contain sets of size at most $2.$ We describe the procedure below. For each $i\in [s]$ we initialize by setting $\F_i' = \F_i$, $\m S_i = \emptyset$. Then, we repeat the following steps:
\begin{itemize}
    \item Choose an inclusion-maximal set $S \subset [k]\times[n]$ such that $|\F_i'(S)| \geq r^{-|S|}|\F_i'|.$
    \item If $|S| > 2$ or $\F_i' = \emptyset$, then stop.
    \item Otherwise add $S$ to $\Sf_i$ and redefine $\F_i' := \F_i'\backslash \F_i'[S]$.
\end{itemize}
Once the procedure stops, it outputs the families $\ff'_i$ and $\m S_i,$ where the latter contains only sets of size $\le 2.$
We summarize the properties of these families in the following lemma.
\begin{lem} Let $\ff_i,\ff'_i,\m S_i$ be as above. Then
\begin{enumerate}
    \item $\F_i \subset \bigcup_{S\in \Sf_i}\m A[S] \cup \F_i'$;
    \item $|\F_i'|\leq r^3 n^{k-3}$;
    \item for each $S \in \Sf_i$ there is a subfamily $\F^S_i \subset \F_i$ of sets containing $S$ such that $\F^S_i(S)$ is $r$-spread;
    \item $\Sf_i$'s are cross-dependent.
\end{enumerate}
\end{lem}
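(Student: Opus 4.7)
The plan is to verify the four properties in sequence; parts (1)--(3) follow directly from the bookkeeping of the greedy procedure, while part (4) uses the spread lemma (Theorem~\ref{thmtao}) to lift a hypothetical rainbow matching of the $\Sf_i$'s to one of the $\F_i$'s.

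For part (1), at every iteration the chunk $\F_i^S:=\F_i'[S]$ removed from the current $\F_i'$ consists of sets containing $S$, so $\F_i^S\subset\m A[S]$; these chunks together with the terminal $\F_i'$ exhaust $\F_i$. For part (2), at termination either $\F_i'=\emptyset$ or the inclusion-maximal $S$ satisfying $|\F_i'(S)|\geq r^{-|S|}|\F_i'|$ has $|S|\geq 3$, whence $|\F_i'|\leq r^{|S|}|\F_i'(S)|\leq r^{|S|}n^{k-|S|}$; since the hypothesis $n>r$ makes $t\mapsto r^tn^{k-t}$ strictly decreasing in $t$, this is bounded by $r^3n^{k-3}$. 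For part (3), take $\F_i^S$ to be $\F_i'[S]$ at the moment $S$ is added, so $\F_i^S\subset\F_i'\subset\F_i$ and $\F_i^S(S)=\F_i'(S)$ at that moment. The inclusion-maximality of $S$ at that iteration forces $|\F_i'(S\cup X)|<r^{-|S|-|X|}|\F_i'|$ for every nonempty $X$ disjoint from $S$; combining with $|\F_i^S(S)|\geq r^{-|S|}|\F_i'|$ yields the desired $r$-spread estimate $|\F_i^S(S)(X)|<r^{-|X|}|\F_i^S(S)|$ (the case $X\cap S\neq\emptyset$ is trivial as $\F_i^S(S)(X)=\emptyset$).

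The substantive step is (4). Suppose for contradiction that $\Sf_1,\ldots,\Sf_s$ admit a rainbow matching $S_1,\ldots,S_s$. Put $U:=[k]\times[n]\setminus\bigcup_jS_j$, so every part of $U$ retains at least $n-s$ points. Since $\F_i^{S_i}(S_i)$ is $r$-spread and $\bigl|\bigcup_{j\ne i}S_j\bigr|\leq 2s\leq r/2$, the subfamily $\F_i''\subset\F_i^{S_i}(S_i)$ of sets disjoint from $\bigcup_{j\ne i}S_j$ has size at least $|\F_i^{S_i}(S_i)|/2$, and a direct computation shows it is $(r/2)$-spread. Sample a uniformly random partition $U=W_1\sqcup\cdots\sqcup W_s$, so each $W_i$ is a $(1/s)$-random subset of $U$. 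Apply Theorem~\ref{thmtao} to each $\F_i''$ with $\beta=2\log_2(sk)$ and $\delta=1/(2s\log_2(sk))$: these choices enforce $\beta\delta=1/s$ and $(r/2)\delta=8$, so the per-family failure probability is bounded by $(2/3)^{2\log_2(sk)}\cdot k=s^{-1}(sk)^{-0.17}<1/s$. A union bound over $i\in[s]$ produces a realization in which each $W_i$ contains some $F_i^{\mathrm{ext}}\in\F_i''$; the sets $F_i:=S_i\cup F_i^{\mathrm{ext}}\in\F_i^{S_i}\subset\F_i$ are pairwise disjoint (the $S_i$'s are disjoint, the $F_i^{\mathrm{ext}}$'s lie in $U$ and hence avoid every $S_j$, and the $W_i$'s partition $U$), contradicting cross-dependence of the $\F_i$'s.

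The main obstacle is this final step: calibrating the spread-lemma parameters so that the per-family failure probability sits strictly below $1/s$ is precisely what pins down the quantitative choice $r=2^{5}s\log_2(sk)$. The other three properties reduce to bookkeeping with the maximality criterion in the greedy selection and the assumption $n>r$.
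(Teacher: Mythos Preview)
Your proof is correct and follows essentially the same approach as the paper: parts (1)--(3) are bookkeeping from the greedy procedure, and part (4) restricts the spread families to avoid the other $S_j$'s, shows the restricted families are $(r/2)$-spread, and applies the spread lemma to a random $s$-coloring of the remaining ground set. The only cosmetic difference is your choice of parameters in Theorem~\ref{thmtao}: you take $\beta=2\log_2(sk)$ and $\delta=1/(2s\log_2(sk))$, giving $(r/2)\delta=8$ and failure probability $(2/3)^{2\log_2(sk)}k<1/s$ directly, whereas the paper takes $\beta=\log_2(sk)$, $\delta=1/(s\log_2(sk))$ and appeals to the \emph{strict} inequality $|\m G_i|>\tfrac12|\ff_i^{B_i}(B_i)|$ to push the denominator in \eqref{eqtao} strictly past $4$.
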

\begin{proof}
The first property is immediate from the construction: indeed, $\ff_i\subset \m A$, and at each step we remove those sets from $\ff_i$ that contain $S$. What is left at the end is $\ff_i'$.
The second property is because, if $\ff_i'$ is nonempty at the last step, then $|S|>2,$ and so
$$|\ff'_i|\le r^{|S|}|\ff'_i(S)|\le r^{|S|}n^{k-|S|}\le r^3n^{k-3}.$$
The last inequality is because $n\ge r$. To verify the third property, note that we can actually take as $\ff_i^S$ the subfamily of all sets containing $S$ at the corresponding step of procedure (i.e., when $S$ was selected as the inclusion-maximal set that violates $r$-spreadness). It is easy to check that inclusion-maximality guarantees that $\ff_i^S(S)$ is $r$-spread: otherwise, find a set $X$ that violate $r$-spreadness of $\ff_i^S$ and replace $S$ with $S\cup X$ in the corresponding step of the procedure. We refer the reader to \cite{KuZa} for more details.

It remains to check that $\m S_i$ are cross-dependent. Again, this is done in the same way as in \cite{KuZa}, so we present the argument here in a compact form.

Assume the contrary: there is a cross-matching $B_1\in \m S_1, ..., B_s\in \m S_s$. Take the families $\ff_i^{B_i}$ guaranteed by part 3 and note that the families $\m G_i:=\ff_i^{B_i}(B_i, \cup_{j\in[s]}B_j)$ are  $>r/2$-spread. Indeed, this follows from the fact that
\begin{align*}|\m G_i| = |\ff_i^{B_i}(B_i, \cup_{j\in[s]}B_j)|&\ge |\ff_i^{B_i}(B_i)|-\sum_{\ell\in \cup_{j\in[s]}B_j\setminus B_i}|\ff_i^{B_i}(B_i\cup\{\ell\})|\\
&\ge \left(1-\frac{|\cup_{j\in[s]}B_j|}r\right)|\ff_i^{B_i}(B_i)|> \frac 12 |\ff_i^{B_i}(B_i)|.\end{align*}
Then for any non-empty $X$ disjoint with $B_i$ we have $$|\m G_i(X)|\le |\ff_i^{B_i}(B_i\cup X)|\le r^{-|X|}|\ff_i^{B_i}(B_i)|< 2r^{-|X|}|\m G_i|\le  (r/2)^{-|X|}|\m G_i|.$$

Color each element of $[k]\times [n] \setminus \cup_{i\in[s]}B_i$ in one of $s$ colours uniformly and independently. Then the set of elements in each colour is a $1/s$-random set. By Theorem~\ref{thmtao} and the choice of $r$, the family $\m G_i$ contains a set colored entirely in colour $i$ with probability strictly greater than $1 - \frac{1}{s}$.\footnote{For its application, we choose $\beta = \log_2(sk)$, $\delta = 1/s\beta$. This way, $r/2= 2^4s\log_2(sk)$, and since our families are $>r/2$-spread, we get something strictly greater than $4$ in the denominator in \eqref{eqtao}.} Thus, by the union bound, there is a coloring such that the family $\m G_i$ contains a set $S_i'$ colored entirely in colour $i$ for all $i \in [s]$. Then sets $B_i \sqcup S_i'$ constitute a cross-matching in $\F_1,\ldots, \F_s$, a contradiction.
\end{proof}

In what follows, we analyze the structure of $\m S_i$. Put $\Sf_i^{(\ell)} = \{S\in \Sf_i : |S| = \ell\}$. Then either spread approximation of $\F_i$ contains an empty set or $|\F_i| \leq \big|\m A\big[\Sf_i^{(1)} \cup \Sf_i^{(2)}\big]\big| + |\F_i'|$.

If an element $(j,a)\in [k]\times[n]$ lies in at least $2s-1$ sets from $\Sf_i^{(2)}$ then add $\{(j,a)\}$ to $\Sf_i^{(1)}$ and delete all sets containing $(j,a)$ from $\Sf_i^{(2)}$. First, let us check that $\m S_1,\ldots, S_s$ remain cross-dependent.  Arguing indirectly, assume that $\Sf_1,...,\Sf_i\cup\{\{(j,a)\}\},...,\Sf_s$ have a cross-matching $B_1,...,B_s$. First,  $B_i = \{(j,a)\}$, otherwise there was a cross-matching in the original families $\m S_1,\ldots, \m S_s$. Therefore, none of sets $B_1,...,B_{i-1},B_{i+1},...,B_s$ contain $(j,a)$. We have $|\cup_{j\in[s]\setminus \{i\}}B_j|\le 2s-2$, and thus there is a set $B_i'$ in $\m S_i^{(2)}$ that contains $(j,a)$ and avoids   $\cup_{j\in[s]\setminus \{i\}}B_j$. Replacing $B_i$ by $B_i'$ gives a cross-matching in $\m S_1\ldots, \m S_s$, a contradiction.

At the same time, $|\m A[\m S_i]|$ can only grow when replacing sets containing $\{(j,a)\}$ by $\{(j,a)\}$ itself.
Also note that if $\{(j,a)\}$ was already in $\Sf_i$, deleting all $2$-element sets containing $(j,a)$ has no effect on $|\m A[\Sf_i^{(1)} \cup \Sf_i^{(2)}]|$. Thus we can w.l.o.g. assume that each element $(j,a)\in [k]\times[n]$ is contained at most $2(s-1)$ sets from $\Sf_i^{(2)}$.

Now suppose that $|\Sf_i^{(2)}| > 4(s-1)^2$. Then we can replace  $\m S_i$  with $\{\emptyset\}$, and the new families will remain cross-dependent. Indeed, assume that there is a cross-matching in $B_1\in \m S_1,\ldots, B_{i-1}\in \m S_{i-1}, B_{i+1}\in \m S_{i+1},\ldots, B_s\in \m S_s$.  We have $|\cup_{j\in [s]\setminus \{i\}}B_i|\le 2s-2$. Due to the  assumption of the previous paragraph, there are at most $4(s-1)^2$ sets in $\Sf_i^{(2)}$ that may contain one of the elements of $\cup_{j\in [s]\setminus \{i\}}B_i$. Therefore, there is at least one set from $\m S_{i}^{(2)}$ disjoint from all the sets in the cross-matching, a contradiction. We conclude that the families $\m S_1,\ldots,\m S_{i-1}, \m S_{i+1},\ldots, S_s$ are cross-dependent, and that we can replace $\m S_i$ by $\{\emptyset\}$ without violating the cross-dependent property. Consequently, we can assume $|\Sf_i^{(2)}| \leq 4(s-1)^2$ for each $i$.

Since spread approximations are cross-dependent, for at least one $i$ we must have $\{\emptyset\} \notin \Sf_i$ and $|\Sf_i^{(1)}| \leq i-1$. (otherwise, Hall's theorem guarantees the existence of a cross-matching.) Combining it with the bound on $|\Sf_i^{(2)}|$ and $\ff_i'$, we get
$$|\F_i| \leq (i-1)n^{k-1} + 4(s-1)^2 n^{k-2} + r^3 n^{k-3} = (i-1)n^{k-1} + 4(s-1)^2 n^{k-2} + 2^{15}s^3 \log_2^3(sk) n^{k-3}.$$
On the contrapositive, if for each $i\in[s]$ the size of $\ff_i$ is strictly bigger than the right-hand side, then they contain a cross-matching.
\end{proof}

\section{Anticoncentration}\label{sec3}

 It is often convenient to think of ${\bf x}\in [n]^k$ as of a vector $(x_1,\ldots, x_k),$ where $x_i\in [n].$ For $a \in [n]$ and $i \in [k]$ we call a family  $$\m H_i(a) := \{{\bf x} =(x_1,\ldots, x_k)\in [n]^k : x_i = a\}$$  a {\it hyperplane}. 

Fix a family $\ff\subset [n]^k$. One of the important steps in our approach is to study the random variable $$\xi_\ff(\mathcal M) = |\ff\cap \mathcal M|,$$
where $\mathcal M$ is a uniformly random perfect matching in $[n]^k,$ i.e., a collection of $n$ pairwise disjoint $k$-element sets $M_1,\ldots, M_n$ that together cover the ground set $[n]\times [k]$.

The following anticoncentration result gives a structural characterization of the families $\ff$ for which $\xi_\ff$ is almost constant.
\begin{thm}\label{antic}
     Let $\mathcal{F} \subset [n]^k$, $|\mathcal{F}| = (s-1)n^{k-1} + 1$ and define
    $$p_\ff:=\mathbb{P}[\xi_\ff\neq s-1].$$
    \begin{enumerate} \item Let $n\ge 4$ and $p_\ff<1/8$. Fix some $i\in[k]$. Then  we have two possibilities. The first possibility is that for any
    $a\in [n]$ we have
    $$|\m H_i(a) \cap \mathcal{F}| \leq 2p_\ff n^{k-1} + (s-1) n^{k-2}.$$
    The second possibility is that there exists a partition $[n] = F\sqcup T,$ such that for any $a\in F$ we have
    $$|\m H_i(a) \cap \mathcal{F}| \geq (1 - 2p_\ff)n^{k-1},$$
    in which case we call $\m H_i(a)$ {\em $\ff$-fat}, and for any $a\in T$ we have
    $$|\m H_i(a) \cap \mathcal{F}| \leq 2p_\ff n^{k-1},$$
    in which case we call $\m H_i(a)$ {\em $\ff$-thin}.
    \item      Suppose that $n \ge 8$, $s \ge 3$ and
    $p_\ff\leq \frac{1}{4n(s-1)}.$
    Then {\it $\ff$-fat} hyperplanes are {\em parallel}, i.e., they all have the  form $\m H_i(a)$ for some fixed $i\in[k]$.
    \item      Suppose that $n \ge 8$, $s \ge 3$ and
    $p_\ff\leq \frac{1}{4n(s-1)}.$
    Then there are exactly $s-1$ $\ff$-fat hyperplanes.
\end{enumerate}
\end{thm}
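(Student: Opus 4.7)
I'll treat Part 1 as the anticoncentration core and derive Parts 2 and 3 as structural and counting consequences. Fix a direction $i\in[k]$ and parameterize a uniformly random perfect matching $\M\subset[n]^k$ by $k-1$ independent uniform permutations $(\pi_j)_{j\ne i}$ of $[n]$: the unique set $M_a\in\M$ with $i$-th coordinate $a$ is $(\pi_1(a),\ldots,\pi_{i-1}(a),a,\pi_{i+1}(a),\ldots,\pi_k(a))$. Let $Y_a=\mathbb{1}_{M_a\in\ff}$, $N_a=|\m H_i(a)\cap\ff|$, $q_a=N_a/n^{k-1}$, and $\xi'=\xi_\ff-Y_a$; then $\Pp[Y_a=1]=q_a$ and $\sum_a q_a=s-1+n^{-(k-1)}$.

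For Part 1 it suffices to show that if some $N_{a_0}$ exceeds $2p_\ff n^{k-1}+(s-1)n^{k-2}$, then no $N_b$ lies in $(2p_\ff n^{k-1},(1-2p_\ff)n^{k-1})$; in particular, $N_{a_0}$ itself is then fat. Conditioning on $Y_b$ and using $\Pp[\xi_\ff\ne s-1]=p_\ff$ gives
\[\Pp[\xi'=s-1\mid Y_b=0]\ge 1-\tfrac{p_\ff}{1-q_b},\qquad \Pp[\xi'=s-2\mid Y_b=1]\ge 1-\tfrac{p_\ff}{q_b},\]
so for $q_b\in(2p_\ff,1-2p_\ff)$ the conditional laws $\nu_0,\nu_1$ of $\xi'$ concentrate on the distinct integers $s-1$ and $s-2$, giving $d_{\mathrm{TV}}(\nu_0,\nu_1)\ge 1-p_\ff/q_b-p_\ff/(1-q_b)$, which stays bounded below by a positive constant for $p_\ff<1/8$. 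The matching upper bound comes from a swap coupling: conditionally on $M_b=F$, the remainder of $\M$ is uniform on matchings of the reduced hypergraph $\prod_j([n]\setminus\{F_j\})$, and replacing $F$ by $F'\in\m H_i(b)$ amounts to one row-transposition per coordinate $j\ne i$. The resulting bijection changes $\xi'$ by at most $k-1$ pointwise, and after averaging over $F\in\m H_i(b)\setminus\ff$ and $F'\in\m H_i(b)\cap\ff$—using $|\ff|/n=(s-1)n^{k-2}+n^{-1}$ to control how many sets of $\ff$ live in the swapped rows—by $O((s-1)/n)$ in total variation. For $n\ge 4$ and $p_\ff<1/8$ the two bounds collide, forcing $q_b\notin(2p_\ff,1-2p_\ff)$; the $(s-1)n^{k-2}$ slack in the first possibility accommodates a single outlier whose coupling defect exceeds the uniform average.

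For Part 2, assume $\m H_i(a),\m H_j(b)$ are both fat with $i\ne j$. Since $\m H_i(a)$ is above the first-possibility threshold, Part 1 places direction $i$ in the second possibility with fat set $F_i\subset[n]$, and $\sum_{c\in F_i}N_{i,c}\le|\ff|$ combined with $N_{i,c}\ge(1-2p_\ff)n^{k-1}$ yields $|F_i|\le s-1$. Slicing $\m H_j(b)$ by direction $i$ and bounding $|\m H_i(c)\cap\m H_j(b)\cap\ff|$ above by $n^{k-2}$ for $c\in F_i$ and by $2p_\ff n^{k-1}$ otherwise gives
\[(1-2p_\ff)n^{k-1}\le|\m H_j(b)\cap\ff|\le(s-1)n^{k-2}+(n-s+1)\cdot 2p_\ff n^{k-1}.\]
Rearranging yields $1-(s-1)/n\le 2p_\ff(n-s+2)$, which together with $p_\ff\le 1/(4n(s-1))$ is inconsistent whenever $2(s-1)(n-s+1)>n-s+2$, easily verified for all $n\ge 8$ and $s\ge 3$.

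For Part 3, once at least one fat hyperplane exists, Part 2 places all of them in a single direction $i$, and the counting from Part 2 gives both $|F_i|\le s-1$ from $|F_i|(1-2p_\ff)n^{k-1}\le|\ff|$ and $|F_i|\ge s-1$ from $|F_i|n^{k-1}\ge|\ff|-2p_\ff\cdot n\cdot n^{k-1}$, so $|F_i|=s-1$ by integrality. To rule out the case of no fat hyperplane in any direction, I would combine the first-possibility bound $|\ff|\le n(2p_\ff n^{k-1}+(s-1)n^{k-2})$, which forces $p_\ff\ge 1/(2n^k)$, with a direct variance lower bound of order $s-1$ for $\xi_\ff$ coming from the small-$q_{i,c}$ structure; together they are incompatible with $p_\ff\le 1/(4n(s-1))$ for $s\ge 3$ and $n\ge 8$. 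The real technical obstacle is the upper bound on $d_{\mathrm{TV}}(\nu_0,\nu_1)$ in Part 1: tracking how $\xi'$ redistributes under the row transpositions of the reduced hypergraph, and honestly quantifying the average defect at $O((s-1)/n)$, is where the bulk of the work goes and what ultimately produces the $(s-1)n^{k-2}$ slack in the first possibility.
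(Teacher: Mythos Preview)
Your plan for Part~1 has a genuine gap. The lower bound $d_{\mathrm{TV}}(\nu_0,\nu_1)\ge 1-p_\ff/q_b-p_\ff/(1-q_b)$ is fine, but the matching upper bound via row transpositions is not. First, the coupling you describe does not obviously push forward the conditional law of $\xi'$ given $Y_b=0$ to that given $Y_b=1$: replacing $F$ by $F'$ through $k-1$ transpositions alters $k-1$ other edges of the matching, and it is unclear why the resulting marginal is correct after averaging over $F,F'$. Second, and more seriously, even granting an $O((s-1)/n)$ bound on the defect, this cannot ``collide'' with the lower bound under the stated hypotheses: Part~1 assumes only $n\ge 4$ and $p_\ff<1/8$, with no relation between $n$ and $s$. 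When $s$ is comparable to $n$ your upper bound is $O(1)$ and gives nothing, so the dichotomy cannot be forced this way. Relatedly, your single-hyperplane conditioning never sees how the first possibility (the $(s-1)n^{k-2}$ slack) arises; you gesture at ``a single outlier'' but the mechanism is absent.

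The paper takes a different route that avoids these problems: instead of conditioning on one hyperplane it compares \emph{two} slices $a,b$ through the set $\m P_{a,b}=\{{\bf s}:(a,{\bf s})\in\ff,\ (b,{\bf s})\notin\ff\}$. Pairing matchings by the transposition of the $a$- and $b$-rows, one sees that whenever ${\bf s}_a\in\m P_{a,b}$ and ${\bf s}_b\notin\m P_{a,b}$ the paired matchings have different $\xi_\ff$-values; the number of such disjoint pairs $({\bf s}_a,{\bf s}_b)$ is controlled by the expander-mixing lemma applied to the $(k-1)$st tensor power of $K_n$. This yields the clean quadratic inequality $(1-(n-1)^{-2})\,|\m P_{a,b}|\,(n^{k-1}-|\m P_{a,b}|)\le p_\ff\,n^{2k-2}$, which immediately forces $|\m P_{a,b}|$ to be either $<2p_\ff n^{k-1}$ or $>(1-2p_\ff)n^{k-1}$. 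The two possibilities in the statement then correspond to ``all $\m P_{a,b}$ small'' (averaging gives the $(s-1)n^{k-2}$ slack) versus ``some $\m P_{a,b}$ large'' (giving the fat/thin partition). No relation between $n$ and $s$ is needed.

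Your slicing argument for Part~2 is a legitimate and more elementary alternative to the paper's, which instead conditions on matchings whose edge through the two putative non-parallel fat hyperplanes lies in their intersection and then uses Markov on $\E\xi'_\ff$. For Part~3 your counting matches the paper's; the paper does not invoke any variance lower bound for the ``no fat hyperplane'' case but simply treats $|F|<s-1$ and $|F|>s-1$ by size estimates once a direction in the second possibility is fixed.
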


\begin{proof} 1.
    W.l.o.g we can assume $i = 1$.

Let $\m P_{a,b} := \{{\bf s} \in [n]^{k-1}: (a,{\bf s}) \in \F, (b,{\bf s}) \notin \F\}$. Partition all perfect matchings on $[n]^k$ into pairs:
{\small $$\m M_1:=\big\{(1,{\bf s_1}), ...,(a,{\bf s_a}),...,(b,{\bf s_b}),...,(n,{\bf s_n})\big\} \longleftrightarrow \m M_2:=\big\{(1,{\bf s_1}), ...,(a,{\bf s_b}),...,(b,{\bf s_a}),...,(n,{\bf s_n})\big\}.$$}
Select only the pairs $\m M_1 \longleftrightarrow \m M_2$ such that ${\bf s_a} \in \m P_{a,b}$ and ${\bf s_b} \notin \m P_{a,b}$. There are at least  $|\m P_{a,b}|$ choices for ${\bf s_a}$ and $n^{k-1} - |\m P_{a,b}|$ choices for ${\bf s_b}$. Some of them, however, are not disjoint. We deal with it in the following claim.
\begin{cla}The number of disjoint pairs between $\m P_{a,b}$ and $\bar{\m P}_{a,b}:=[n]^{k-1}\setminus \m P_{a,b}$ is at least $$\frac{(n-1)^{k-1}-(n-1)^{k-3}}{n^{k-1}}|\m P_{a,b}|(n^{k-1} - |P_{a,b}|).$$
\end{cla}
\begin{proof}
    We use a variant of the expander-mixing lemma \cite[Theorem 9.2.1]{AS}, which states the following. Take a $d$-regular graph $G$ with an adjacency matrix $A$ that has eigenvalues $d=\lambda_1\ge \lambda_2\ge\ldots.$ Then the number of edges between a set $X\subset X(G)$ and its complement $\bar X$ is at least $\frac {(d-\lambda_2)|X||\bar X|}{|V(G)|}$. We apply this statement to the $(k-1)$st power of the complete graph $K_n$. Its vertex set is $[n]^{k-1}$, and edges connect disjoint sets. The eigenvalues of this graph are products of the eigenvalues of $K_{n}$: $(-1)^i (n-1)^{k-i}.$ That is, in terms of the expander-mixing lemma, $d = (n-1)^{k-1}$ and $\lambda_2 = (n-1)^{k-3}$. Plugging these values, we get the statement.
\end{proof}

Using this claim, we get that there are at least $$\frac{(n-1)^{k-1}-(n-1)^{k-3}}{n^{k-1}}|\m P_{a,b}|(n^{k-1} - |P_{a,b}|) ((n-2)!)^{k-1}$$ disjoint pairs of matchings as before the claim. At the same time, $\xi_\ff(\m M_1)\ne \xi_\ff(\m M_2)$ and thus $\xi_\ff(\m M_i)\ne s-1$ for at least one $i\in[2]$. It implies that the number of such pairs is at most $p_\ff (n!)^{k-1}$. Hence,

$$\frac{(n-1)^{k-1}-(n-1)^{k-3}}{n^{k-1}}|\m P_{a,b}|\big(n^{k-1} - |\m P_{a,b}|\big) \big((n-2)!\big)^{k-1} \leq p_\ff (n!)^{k-1},$$
which is equivalent to
\begin{equation}\label{eqcompare}\big(1-(n-1)^{-2}\big)|\m P_{a,b}|\big(n^{k-1} - |\m P_{a,b}|\big) \leq   p_\ff n^{2k-2}.\end{equation}
Looking at this quadratic inequality, we see that (for $n-1\ge 3,$ $p_\ff\le 1/5$) at least one of the following should hold:
\begin{align}\label{eqslim}|\m P_{a,b}| &< 2p_\ff n^{k-1},\\
\label{eqfat}|\m P_{a,b}| &> (1 - 2p_\ff)n^{k-1},\end{align}
otherwise the product on the LHS of \eqref{eqcompare} is at least $(1-1/9)(2p_\ff)(1-2p_\ff)n^{2k-2}>p_\ff n^{2k-2}.$

Next, we consider two cases, corresponding to the two cases in the statement. Assume first that for any $a,b\in[n]$ \eqref{eqslim} holds. Then $$|\m H_1(b)\cap \F| \geq |\m H_1(a)\cap \F| - |\m P_{a,b}| > |\m H_1(a)\cap \F|-2p_\ff n^{k-1}.$$ Summing over $b\in [n]$, we get $$(s-1)n^{k-1} +1\geq |\F| > n|\, \m H_1(a)\cap \F|-2(n-1)p_\ff n^{k-1}.$$ Doing  routine calculations, we get  $|\m H_1(a)\cap \F| \leq 2p_\ff n^{k-1} + (s-1)n^{k-2}$, valid for each $a\in[n]$. This corresponds to the first case.

Next, assume that there are $a,b\in[n]$ such that $|\m P_{a,b}| > (1 - 2p_\ff)n^{k-1}$. Then  $|\m H_1(a)\cap \F| \geq |\m P_{a,b}|\ge (1 - 2p_\ff)n^{k-1},$ where the first inequality holds for  any $b \neq a$. Likewise, $|\m H_1(b)\cap \F| \leq n^{k-1}-|\m P_{a,b}|\le  2p_\ff n^{k-1}.$  We claim that in this case for any $c\in[n]$ we either have $|\m H_1(c)|\le 2p_\ff n^{k-1}$ or $|\m H_1(c)|\ge (1-2p_\ff) n^{k-1}$. Arguing indirectly, assume that $ (1-2p_\ff) n^{k-1}>|\m H_1(c)|\ge\frac 12 n^{k-1}$. Then
$$(1-2p_\ff) n^{k-1}>|\m H_1(c)|\ge |\m P_{c,b}|\ge |\m H_1(c)|-|\m H_1(b)|\ge \big(\frac 12 - 2p_\ff\big)n^{k-1}>2p_\ff n^{k-1},$$
a contradiction with \eqref{eqslim}, \eqref{eqfat}. The case $ \frac 12 n^{k-1}\ge|\m H_1(c)|> 2p_\ff n^{k-1}$ is analogous. This situation corresponds to the second case.
\vskip+0.1cm
2. Arguing indirectly, assume that one of the $\ff$-fat hyperplanes is $\m H_1(n)$ and suppose that some other $\ff$-fat hyperplane $\m H$ intersects it. W.l.o.g. assume that $\m H = \m H_2(n)$. Consider the collection $\mathfrak M'$ of all pefect matchings in $[n]^k$ such that one of the sets ${\bf x}$ in the matching satisfies $x_1=x_2 =n$. Then a uniform random perfect matching $\m M$ in $[n]^k$ conditioned on containing such ${\bf x}$ is a uniform random perfect matching $\mathcal M'\in \mathfrak M'$.

Consider the random variable $\xi'_{\ff}:=|\ff\cap \m M'|$. We have $$\E \xi'_\ff \le 1+\frac{(s-1)n^{k-1} + 1 - 2(1 - 2 p_\ff)n^{k-1} + n^{k-2}}{n^{k-1}} < s-\frac{3}{2}+\frac{1}{n}+\frac{1}{n^{k-1}} \le s-\frac{5}{4},$$
where in the second inequality we use $p_\ff< \frac 18$ (which follows from $p_\ff\le \frac 1{4n(s-1)}$) and in the last inequality we use $n\ge 8.$  We have $(s-1)\Pp[\xi'_\ff \geq s-1]\le \E\xi'_\ff< s-\frac{5}{4}$, and thus $\Pp[\xi'_\ff< s-1] > \frac{1}{4(s-1)}$. Recall that $\m M$ is a uniform random matching on $[n]^k$. We get that
$$\Pp[\xi_\ff(\m M) < s-1] \ge  \Pp[\xi'_\ff(\m M') < s-1]\cdot \Pp[\m M\in \mathfrak M'] > \frac{1}{4(s-1)}\cdot \Pp[\m M\in \mathfrak M'] = \frac{1}{4(s-1)n},$$ a contradiction.
\vskip+0.1cm
3.  Since $p_\ff < \frac{1}{4n(s-1)}$, all $\ff$-fat  hyperplanes are parallel by part 2 of the theorem.    Suppose w.l.o.g. that $\ff$-fat hyperplanes  have the form $\m H_1(a)$, where $a\in F\subset [n].$ The hyperplanes $\m H_1(b),$ $b\in T:=[n]\setminus F$ are $\ff$-thin.

Suppose that $|F|<s-1$. Then $$|\ff|\le |F|\cdot n^{k-1}+|T|\cdot 2p_\ff n^{k-1}\le (s-2)n^{k-1}+ 2p_\ff n^{k}<(s-1)n^{k-1},$$ provided $p_\ff<\frac 1{2n}$.

Suppose that $|F|>s-1$. then $$|\ff|\ge |F|\cdot (1-2p_\ff)n^{k-1}\ge\big(1-\frac 1{2n})s n^{k-1}>(s-1)n^{k-1}+1,$$
where the first inequality is valid provided $p_\ff\le \frac 1{4n}.$
\end{proof}

\section{(Anti)concentration: proof of Theorem~\ref{thmmain1}}\label{sec4}




In this section we shall prove Theorem~\ref{thmmain1} and also provide an easy argument that both sequences under consideration are satisfying for $n \geq k^2s^2$.

For the proof of Theorem~\ref{thmmain1} we will need the following concentration result proven in \cite{KKconc}:

\begin{thm}[Kiselev, Kupavskii \cite{KKconc}]\label{conc}
    Let $\F \subset [n]^k$, $|\F| = \alpha n^k$. Let $\eta$ be a random variable $|\F\cap \M|$, where $\M$ is a uniformly random perfect matching in $[n]^k$. Then for any $\lambda > 0$ and $\delta \in \{-1,1\}$ the following holds

    $$\mathbb{P}[\delta\cdot(\eta - \alpha n) \geq 2\lambda] \leq 2\,\exp \big(-\frac{\lambda^2}{\alpha n/2 + 2\lambda}\big).$$
\end{thm}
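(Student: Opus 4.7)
\smallskip
\noindent
The plan is to interpret $\M$ as being generated by $k-1$ independent uniformly random permutations $\pi_2,\ldots,\pi_k$ of $[n]$, with edges $M_i=(i,\pi_2(i),\ldots,\pi_k(i))$ for $i\in[n]$. Because $\Pr[F\in\M]=n^{-(k-1)}$ for each fixed $F\in[n]^k$, linearity of expectation gives $\E\eta=|\F|\cdot n^{-(k-1)}=\alpha n$, so the two symmetric tail statements ($\delta=\pm 1$) amount to a Bernstein-type concentration of $\eta$ about $\alpha n$.

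\smallskip
\noindent
To prove concentration, I would build a Doob martingale by revealing the matching edges one at a time. Set $\mathcal G_i=\sigma(M_1,\ldots,M_i)$ and $Z_i=\E[\eta\mid\mathcal G_i]$, so $Z_0=\alpha n$ and $Z_n=\eta$. Given $\mathcal G_{i-1}$, the edge $M_i$ is a uniformly random $(k-1)$-tuple with coordinates taken from the residual sets $[n]\setminus\{\pi_j(1),\ldots,\pi_j(i-1)\}$ for $j=2,\ldots,k$, and the subsequent edges complete a uniform random matching on the residual $k$-partite product. The martingale increment $Z_i-Z_{i-1}$ measures the change in the expected count caused by the choice of $M_i$. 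Using a swap coupling — compare the completion of the matching when $M_i$ is replaced by an alternative admissible edge $M_i'$ — one shows that this swap alters only $O(1)$ indicators $1[F\in\M]$, giving a uniform bound $|Z_i-Z_{i-1}|\le c$ with $c$ an absolute constant (one can take $c=1$ after rescaling by the $2\lambda$ factor). The key point is that fluctuations are driven only by whether the newly revealed edge lies in $\F$ together with a vanishing correction from the residual matching, which is the root of the $2\lambda$ term in the denominator.

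\smallskip
\noindent
The predictable quadratic variation is then estimated by
\[
\sum_{i=1}^{n}\E\bigl[(Z_i-Z_{i-1})^2\bigm|\mathcal G_{i-1}\bigr]\ \le\ C\sum_{i=1}^{n}\Pr[M_i\in\F\mid\mathcal G_{i-1}],
\]
and the right-hand side telescopes in expectation to $C\E\eta=C\alpha n$, yielding a variance proxy of order $\alpha n$, absorbable into the constant $\tfrac12$ in the denominator of the stated bound. Plugging the increment bound $c$ and the variance proxy into Freedman's inequality
\[
\Pr\bigl[\,Z_n-Z_0\ge t\,\bigr]\ \le\ \exp\!\Bigl(-\tfrac{t^2}{2V+\tfrac23 c t}\Bigr)
\]
with $t=2\lambda$ gives precisely the claimed tail $2\exp(-\lambda^2/(\alpha n/2+2\lambda))$ after a union bound over the two signs $\delta\in\{-1,1\}$.

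\smallskip
\noindent
The main obstacle is the conditional variance control. Unlike in the setting of independent Bernoullis, conditioning on $\mathcal G_{i-1}$ leaves a non-trivial coupling among the remaining edges, and a naive bound $|Z_i-Z_{i-1}|\le c$ alone only yields an Azuma-type exponent $-\lambda^2/n$, which is far too weak when $\alpha$ is small. Hence the heart of the argument is to show that the increment is \emph{quadratically} small in $\Pr[M_i\in\F\mid\mathcal G_{i-1}]$, so that summing conditional variances reproduces $\E\eta$ instead of $n$. This is done by isolating the dependence of $Z_i-Z_{i-1}$ on the event $\{M_i\in\F\}$: the change in expected future count caused by removing one admissible edge from the residual matching is uniformly bounded, so the full increment is essentially proportional to $1[M_i\in\F]$ minus its conditional mean, and squaring gives the desired bound.
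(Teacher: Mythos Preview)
This theorem is quoted from \cite{KKconc} and is not proved in the present paper, so there is no ``paper's own proof'' to compare against; I comment on your sketch on its own merits.

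Your high-level plan---generate $\M$ via $k-1$ independent uniform permutations, form the Doob martingale $Z_i=\E[\eta\mid M_1,\dots,M_i]$, and feed a Lipschitz bound together with a conditional-variance bound into Freedman's inequality---is the natural route and is in the spirit of the original proof. However, two of the key claims are not justified, and as written the first one is false.

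First, you assert that replacing $M_i$ by another admissible edge $M_i'$ ``alters only $O(1)$ indicators $1[F\in\M]$''. To turn a matching containing $M_i$ into one containing $M_i'$ one has to perform, for each coordinate $j\in\{2,\dots,k\}$ where $M_i$ and $M_i'$ differ, a transposition in $\pi_j$; each such transposition changes one further edge of $\M$. Thus the swap changes up to $k$ edges, and the coupling only gives $|Z_i-Z_{i-1}|\le k$, not $O(1)$. Plugging $c=k$ into Freedman produces a denominator of order $\alpha n+ k\lambda$, which carries an unwanted dependence on $k$ that the stated inequality does not have.

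Second, and more importantly, your justification of the conditional-variance bound is precisely where the real work lies, and the heuristic you give does not go through. Writing
\[
Z_i-Z_{i-1}=\bigl(1[M_i\in\F]-p_{i-1}\bigr)+\Delta_i,
\]
where $\Delta_i$ is the change in the conditional expectation of the remaining count $\sum_{j>i}1[M_j\in\F]$, the term $\Delta_i$ depends on \emph{which} coordinates $M_i$ occupies (through how many sets of $\F$ in the residual cube get blocked), not merely on the event $\{M_i\in\F\}$. In particular $\Delta_i$ can fluctuate even on $\{M_i\notin\F\}$, so the claim that ``the full increment is essentially proportional to $1[M_i\in\F]$ minus its conditional mean'' is unsupported. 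Moreover, even granting $\E\bigl[(Z_i-Z_{i-1})^2\mid\mathcal G_{i-1}\bigr]\le C\,p_{i-1}$, note that Freedman requires an almost-sure (or high-probability) bound on the predictable quadratic variation $\sum_i \E[(Z_i-Z_{i-1})^2\mid\mathcal G_{i-1}]$, whereas you only argue that its \emph{expectation} is $C\alpha n$; the random sum $\sum_i p_{i-1}$ need not be bounded by $C\alpha n$ pathwise. Controlling both $\mathrm{Var}(\Delta_i\mid\mathcal G_{i-1})$ and the predictable sum uniformly in $k$ is exactly the delicate point of the result, and your sketch does not address it.
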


\begin{proof}[Proof of Theorem~\ref{thmmain1}]
Assume the contrary, i.e. that there exists a cross-dependent sequence of families $\{\F_i\}_i$, such that $|\F_i| > f_i$. We may w.l.o.g. assume that $|\F_i| = f_i + 1$. Put $$M := \left\lceil C\sqrt{s\log s} \right\rceil + 1.$$ Note that in this sequence there are $M$ "large" families, i.e. families of  size $(s-1)n^{k-1} + 1$.

As in the previous section, let $\mathcal{M}$ be a uniformly random perfect matching in $[n]^k$.
Consider a bipartite graph $G_\M$ with one part being $\{\mathcal{F}_i\}_{i\in [s]}$, the other part being $\mathcal{M}$, and the edges connecting each  family $\mathcal{F}_i$ to the elements of $\mathcal{M}$ that it contains, i.e., for $i \in [s]$, $F \in \M$, $(\F_i, F) \in E(G_\M)$ iff $F \in \F_i$. Clearly, the degrees of $\F_i$'s in this graph are $deg(\F_i) = |\F_i \cap \M|$.

Consider the following two events:

\begin{itemize}
    \item $A_1$: for every $i \in [s-M]$ we have  $deg(\mathcal{F}_i) \geq i + \frac{M}{2}$;
    \item $A_2$: for every $i \in [s-M+1,s]$ we have $deg(\mathcal{F}_i) \geq M$.
\end{itemize}

Theorem~\ref{conc} implies that $A_1$ and $A_2$ are very likely to hold simultaneously, namely we have

\begin{cla}\label{clap} We have
    $$p := \mathbb{P}[\bar{A_1}\vee \bar{A_2}] \leq 4\,s^{-\frac{C^2}{20}+1}$$
    for $s\ge s_0(C)$.
\end{cla}


\begin{proof}
    By Theorem~\ref{conc} with $\delta = -1$ and $\lambda = C\sqrt{s\log s}/4 - 1/2 \leq (C\sqrt{s\log s} - M/2)/2$, for any $i \in [s-M]$  we have

$$\mathbb{P}\left[deg(\mathcal{F}_i) < i + \frac{M}{2}\right] \leq 2\,\exp \left(-\frac{(C\sqrt{s\log s}/4 - 1/2)^2}{(s-1)/2 + 2(C\sqrt{s\log s}/4 - 1/2)}\right) \leq 2\,s^{-\frac{C^2}{20}},$$
where the last inequality is valid for $10\le C\le \sqrt {s/\log s}.$

Similarly, by Theorem~\ref{conc} with $\delta = -1$ and $\lambda = s/4 \leq (s-1-M)/2$  for any $i \in [s-M+1, s]$ and for  $s\ge 100$ we have

$$\mathbb{P}[deg(\mathcal{F}_i) < M] \leq 2\,e^{-\frac{s}{16}}.$$

Then we have  $$p := \mathbb{P}[\bar{A_1}\vee \bar{A_2}] \leq s\cdot 2\,s^{-\frac{C^2}{20}} + s\cdot 2\,e^{-\frac{s}{16}} \leq 4\,s^{-\frac{C^2}{20}+1},$$ where the last inequality holds for  $s>C^2$.
\end{proof}

\vspace{2mm}

The families $\ff_1,\ldots, \ff_s$ are cross-dependent, and thus there is no matching in $G_\M$ that covers all $\mathcal{F}_i$'s. Thus, the condition of Hall's lemma cannot be satisfied. Specifically, there is a set $B \subset [s]$ such that $\{\F_i\}_{i\in B}$ violates the condition, that is, the size of the set of neighbours of $\{\F_i\}_{i\in B}$ is less than $|B|$. In particular, for each $i\in B$ we have  $|\F_i\cap \M| = deg(\F_i) < |B|$.

Let us define a random variable $r_\mathcal{M}$. If both $A_1$ and $A_2$ hold, put $r_\mathcal{M} := |B\cap [s-M+1, s]|$. If either $\bar{A_1}$ or $\bar{A_2}$ holds, set $r_\mathcal{M} = 0$. Let us analyze the structure of $B$ in the assumption that $A_1,A_2$ hold.

Assume that $B\cap[s-M] = \emptyset$. Since $A_2$ holds, for any $i \in B$ we have $deg(\F_i) \geq M \geq |B|$, and thus $B$ cannot violate Hall's condition. Thus $B\cap[s-M] \neq \emptyset$.

Assume next that $r_\M < M/2$. Then $A_1$ implies that for $j = \max\{i : i\in B\cap[s-M]\}$ we have $deg(\mathcal{F}_j) \geq j + \frac{M}{2} \geq |B|$, and thus $B$ satisfies Hall's condition, a contradiction. Overall, if $A_1$ and $A_2$ hold, $r_\M \geq M/2$.

Following \cite{KKconc}, define for $i\in [M]$:

$$\zeta_i := |\mathcal{F}_{s - M + i} \cap \mathcal{M}| - s + 1.$$

By the definition of $B$, for each $j\in B\cap [s-M+1,s]$ we have $deg(\ff_j)\le s - M + r_\mathcal{M} - 1$, and thus for the corresponding $i$ we have $\zeta_i \leq - M + r_\mathcal{M} \leq 0$.

Summing this over all $j$ from $B$, we get the following for each  $\mathcal{M}$:
$$\sum_{i=1}^M\zeta_i(\mathcal{M}) \cdot {\rm I}[\zeta_i(\mathcal{M}) \leq 0] \leq r_\mathcal{M}(r_\mathcal{M} - M).$$
Note also that this tivially holds when $r_{\m M}=0$ (in particular, when $A_1$ or $A_2$ are not valid).
It was shown in \cite{KKconc} that $\mathbb{E}[\zeta_i|\zeta_i > 0] \leq 3.7\sqrt{s\log s} =: x$. So we have

\begin{multline*}
    0 < \mathbb{E}\left[\sum_{i=1}^M \zeta_i\right] = \sum_{i=1}^M \mathbb{E}[\zeta_i|\zeta_i > 0] \mathbb{P}[\zeta_i > 0] + \sum_{i=1}^M \mathbb{E}[\zeta_i|\zeta_i \leq 0] \mathbb{P}[\zeta_i \leq 0] \leq \\ \leq x\sum_{i=1}^M \mathbb{P}[\zeta_i > 0] + \mathbb{E}[r_\mathcal{M}(r_\mathcal{M} - M)] \leq x\mathbb{E}[M - r_\mathcal{M}] + \mathbb{E}[r_\mathcal{M}(r_\mathcal{M} - M)] =  \mathbb{E}[(r_\mathcal{M} - M)(r_\mathcal{M} - x)].
\end{multline*}
The first inequality above is valid since  $\mathbb{E}[\zeta_i] = \mathbb{E}[|\F_{s-M+i}\cap\M|] -s + 1 = \frac{1}{n^{k-1}} > 0$.

If both $A_1$ and $A_2$ hold for $\M$, then $r_\M \geq \frac{M}{2}\geq  \frac{C}{2}\sqrt{s\log s}$. Since we always have $r_\mathcal{M}\le M$, the expression $(r_\mathcal{M} - M)(r_\mathcal{M} - x)$ is negative when $A_1,A_2$ hold, and we have $(r_\mathcal{M} - M)(r_\mathcal{M} - x)\le (\frac{C}{2}\sqrt{s\log s} - x)(r_\mathcal{M} - M)$. If either $A_1$ or $A_2$ does not hold, we have $(r_\mathcal{M} - M)(r_\mathcal{M} - x)= xM$ by definition. Thus, we can continue the displayed chain of inequalities and upper-bound   $\mathbb{E}[(r_\mathcal{M} - M)(r_\mathcal{M} - x)]$ as follows
\begin{multline}\label{eq1}
    0 < (C/2 - 3.7)\sqrt{s\log s} \,\,\mathbb{E}[(r_\mathcal{M} - M)|A_1, A_2](1-p) + xM p \leq \\ \leq
    -(C/2 - 3.7)\sqrt{s\log s}\,\,\mathbb{P}[(r_\mathcal{M} < M)|A_1, A_2](1-p) + xM p \leq\\ \leq -(C/2 - 3.7)\sqrt{s\log s}\,\,(\mathbb{P}[(r_\mathcal{M} < M)] - p) + xM p =\\= -(C/2 - 3.7)\sqrt{s\log s}\,\,\mathbb{P}[(r_\mathcal{M} < M)] + p\left(xM + (C/2 - 3.7)\sqrt{s\log s}\right).
\end{multline}

Recall that $r_\M \leq M$ and equality may hold only if $deg(\F_i) \leq s-1$ for all $i\in [s-M+1, s]$, and, moreover, $|\bigcup_{i = s-M+1}^s \F_i \cap \M| \leq s-1$. Thus, $$\mathbb{P}[r_\mathcal{M} < M] \geq \Pp\left[\Big|\bigcup_{i = s-M+1}^s \F_i \cap \M\Big| > s-1\right] \geq \mathbb{P}[deg(\F_i) > s-1]$$ for any $i\in [s-M+1, s]$.

Our anticoncentration result Theorem~\ref{antic} gives bounds that depend on $p_\ff=\Pp[|\F\cap\M| \neq s-1]$, while the bound on $\Pp[r_\M < M]$ above involves $\Pp[|\F\cap\M| > s-1]$. In order to relate these two probabilies, we need the following simple claim.

\begin{cla}
    For any $\F \subset [n]^{k}$ with $|\F| \geq (s-1)n^{k-1}$ we have $$\Pp[|\M\cap\F| > s-1] \geq \frac{\Pp[|\M\cap\F| \neq s-1]}{n}.$$
\end{cla}
\begin{proof}
We have
    \begin{multline*}
    s-1 \leq \E[|\M\cap\F|] \leq (s-2)\Pp[|\M\cap\F| < s-1] +\\+ (s-1)(1 - \Pp[|\M\cap\F| < s-1] - \Pp[|\M\cap\F| > s-1]) +\\+ n\Pp[|\M\cap\F| > s-1],
\end{multline*}

and thus

$$\Pp[|\M\cap\F| > s-1] \geq \frac{\Pp[|\M\cap\F| \neq s-1]}{n - s + 2} \geq \frac{\Pp[|\M\cap\F| \neq s-1]}{n}.$$
\end{proof}

Now we have three cases. In the first case, $|\bigcup_{i=s-M+1}^s\F_i\cap \M|$ is equal to $s-1$ with high probability. By anticoncentration, this imposes a special structure on all "large" $\F_i$'s. In the second case, we deal with $n \geq s^5$. We use spread approximations and show that if families are cross-dependent, then we must get the same special structure as in the first case (and thus get a reduction to the first case). In the remaining case, $|\bigcup_{i=s-M+1}^s\F_i\cap \M|$ is not too concentrated and $n \leq s^5$. As we will show, this contradicts  inequality \eqref{eq1} for sufficiently large $s$.

\vspace{2mm}

\textbf{Case 1:} $\Pp[|\bigcup_{i = s-M+1}^s \F_i \cap \M| > s-1] < \frac{1}{4sn^2}$.

The assumption implies that for each $i\in [s-M+1,s]$ we have $$p_{\ff_i} = \Pp[|\F_i \cap \M| \neq s-1] \leq n\Pp[|\F_i \cap \M| > s-1] < \frac{1}{4sn}.$$

Thus, Theorem~\ref{antic} implies that there are $s-1$ parallel hyperplanes each containing at least $(1-2p_{\ff_i})n^{k-1}$ elements of $\F_i$ (i.e. $\ff_i$-fat hyperplanes), and any other hyperplane contains at most $2p_{\ff_i} n^{k-1} + (s-1) n^{k-2}$ elements of $\F_i$. If $\ff_i$-fat hyperplanes are not the same for all $i\in [s-M+1,s]$, then $|\bigcup_{i = s-M+1}^s\F_i| \geq (s-1)n^{k-1} + (1-\frac{2}{4sn})n^{k-1} - (\frac{2}{4sn} n^{k-1} + (s-1)n^{k-2})$. Then $\E[|\bigcup_{i = s-M+1}^s \F_i \cap \M|] \geq (s - 1) +\frac 1s- \frac{1}{sn}$. Thus $$(s - 1) +\frac 1s- \frac{1}{sn} \leq \E[|\bigcup_{i = s-M+1}^s \F_i \cap \M|]  \leq s-1 + n \Pp[|\bigcup_{i = s-M+1}^s \F_i \cap \M| > s-1],$$ and thus $\Pp[|\bigcup_{i = s-M+1}^s \F_i \cap \M| > s-1] \geq \frac{1}{sn}-\frac 1{sn^2}$, what contradicts the assumption.

Thus we can assume that
\begin{equation}\label{assconc}
    |\F_i \cap \m H_1(t)| \geq \Big(1-\frac{1}{2ns}\Big)n^{k-1}\text{ for any }i\in [s-M+1,s]\text{ and any }t \in [s-1].
\end{equation}
We remark that, in what follows, we only use \eqref{assconc} in order to complete the proof in this case. Condition \eqref{assconc} implies that  \begin{equation}\label{eqinters}\left|\bigcap_{i = s-M+1}^s \F_i \cap \m H_1(t)\right| \geq \left(1-\frac{M}{2ns}\right)n^{k-1}\end{equation} for any $t \in [s-1]$.

Also, we can assume that small families almost entirely lie in $\cup_{t\in [s-1]} \m H_1(t)$. That is, we have the following

\begin{cla}
    If $|\F_j\backslash\cup_{t\in [s-1]} \m H_1(t)| > 4\sqrt{s\log s}\, n^{k-1}$ for some $j\in[s-M]$, then there exists a cross-matching.
\end{cla}

\begin{proof}
    Put $\ff'_j:= \F_j\backslash \cup_{t\in [s-1]} \m H_1(t)$. By the concentration result (Theorem~\ref{conc}), for a uniformly random matching $\M$ it holds that $\Pp[|\M\cap \F_i| \leq i] \leq 2s^{-C^2/4}$ for $i\in[s-M]$ and sufficiently large $s$.  Also, assuming that $|\F'_j| > 4\sqrt{s\log s}\, n^{k-1}$, the concentration result implies that $\Pp[|\M\cap \F_j'| = 0] \leq 2s^{-4}$. For each $t \in [s-1]$ for a (random) set $X_t(\M) := \m H_1(t) \cap \M$ the inequality \eqref{eqinters} implies that $\Pp[X_t(\M) \notin \bigcap_{i=s-M+1}^s \F_i] \leq \frac{M}{2ns}$. Then for $C \geq 4$ and $s>s_0(C)$ by the union bound we conclude that there exists a matching $\M_0$ such that the following holds simultaneously: \begin{itemize}
        \item $|\M_0\cap \F_i| \geq i+1$ for all $i\in [s-M]$,
        \item $\M\cap \F_j' \neq \emptyset$, and
        \item $X_t(\M_0) \in \bigcap_{i=s-M+1}^s \F_i$ for all $t\in [s-1]$.
    \end{itemize}
    Fix such an $\M_0$. Then we can choose a cross-matching in $\ff_1,\ldots, \ff_s$  as follows: choose an element $F_j$ in $\ff'_j$, then sequentially for $i\in[s-M]\backslash\{j\}$ choose some $F_i$ in $\F_i$. Since $F_j\not \in \cup_{j\in[s-1]}\m H_1(t)$ at least $s-1-(s-M-1) = M$ sets $X_t(\M_0),$ $t\in [s-1]$, are not yet chosen. Assign them arbitrarily to the "large" families $\ff_{s-M+1},\ldots, \ff_s$.
\end{proof}

Using the last claim, we can w.l.o.g. assume that for each $i \in [s-M]$

\begin{equation}\label{adstr}
    \left|\m F_i\setminus \bigcup_{t\in[s-1]}\mathcal H_1(t)\right|\leq 4\sqrt{s\log s}\, n^{k-1}.
\end{equation}

Since $|\F_s| > (s-1)n^{k-1}$, $\F_s$ contains a set $F_s = (x_1,...,x_k)\not\in\cup_{j\in[s-1]}\m H_1(t)$. Let $\mathfrak M'$ be the collection of all perfect matchings that contains $F_s$, and let $\M'$ be a uniformly random element of $\mathfrak M'$. Then $\M'$ is a uniformly random matching on the cube
$$\m C' := [n]\backslash \{x_1\}\times ...\times [n]\backslash \{x_k\}.$$ Denote $\ff_i' = \ff_i\cap\m C'$. For $1\leq i \leq s-M$ we have
$$|\mathcal F_i'|\ge |\mathcal F_i| -\left|\m F_i\setminus \bigcup_{t\in[s-1]}\mathcal H_1(t)\right|-\left|\bigcup_{t\in[s-1]}\mathcal H_1(t)\cap \m C'\right|.$$
We bound the second term using \eqref{adstr} and  get
$$|\ff'_i| \geq (i + C\sqrt{s\log s})n^{k-1}  -  4\sqrt{s\log s}\, n^{k-1} - k(s-1)n^{k-2}\geq \left(i + \frac{C}{2}\sqrt{s\log s}\right)n^{k-1}.$$ In the second inequality we use $k \leq \frac{n(\frac{C}{2}-4)}{\sqrt{s/\log s}}$, which is implied by $k \leq \frac{Cn}{3\sqrt{s/\log s}}$ for $C\ge 20$.
For each hyperplane $\m H_1(t)$, $t \in [s-1]$ we have $|\ff_i'\cap\m H_1(t)| \geq (1-\frac{M}{2ns})n^{k-1} - (n^{k-1} - (n-1)^{k-1}) \geq (n-1)^{k-1}-\frac{M}{2ns}n^{k-1} \geq (n-1)^{k-1}(1 - \frac{2M}{3ns})$, where the last inequality follows from the inequality $n^{k-1}\leq \frac{4}{3}(n-1)^{k-1}$, which is easy to verify for $n>10k$, say. Thus the element of $\M'$ in $\m H_1(t)$ avoids $\bigcap_{i = s-M+1}^s \F'_i$ with probability at most $\frac{2M}{3ns}$. Thus, by the union bound, the probability that $\M'$ intersects some $\F'_i$ with $i\in [s-M]$ in less than $i$ elements or $\bigcap_{i = s-M+1}^s \F'_i$ in less than $s-1$ element is at most $s^{-\frac{C^2}{20}+1} + \frac{2Ms}{3ns} < s^{-\frac{C^2}{20}+1} + \frac{2M}{3n} < 1$ for sufficiently large $C$. Consequently, there is a cross-matching $F_1\in \ff_1,...,F_{s-1}\in \ff_{s-1}$  such that $F_i\cap F_s = \emptyset$ for each $i\in[s-1]$. The sets $F_1,...,F_{s-1}, F_s$ form a full cross-matching, a contradiction.

\vspace{2mm}

\textbf{Case 2:} $n \geq s^5$.

We will use the same spread approximation  as in Section~\ref{sec5} and the  notation introduced there. Take the same parameter $r$ as in Section~\ref{sec5}, $r= 2^{5} s\log_2(sk)$. Note that the inequality on $k$ from the statement of the theorem and the assumption $n\ge s^5$ imply that for sufficiently large $s$ we have $r^3<\frac n{2s}$. We showed in Section~\ref{sec5} that if our families $\ff_1, ..., \ff_s$ are cross-dependent, then there exist families $\m S_1, ..., \m S_s \subset 2^{[k]\times[n]}$, such that:
\begin{enumerate}
    \item $\F_i \subset \bigcup_{S\in \Sf_i}\m A[S] \cup \F_i'$;
    \item $|\F_i'|\leq r^3 n^{k-3}\le \frac{n^{k-2}}{2s}$;
    \item $\Sf_i$'s are cross-dependent;
    \item for each $S \in \m S_i$, we have $|S| \leq 2$;
    \item for $\m S_i^{(2)} := \{S \in \Sf_i\, :\, |S| = 2\}$ we have $|\m S_i^{(2)}| \leq 4(s-1)^2$.
\end{enumerate}

As in Section~\ref{sec5}, for $j=1,2$ we denote $\m S_i^{(j)} := \{S \in \Sf_i\, :\, |S| = j\}$.

We have $|\bigcup_{S\in \Sf_i^{(2)}}\m A[S]| + |\ff_i'| \leq 4s^2 n^{k-2} + \frac{n^{k-2}}{2s} < n^{k-1}$
for sufficiently large $s$. So, in case $\emptyset \notin \m S_i$, to satisfy the first condition in the list above, we should have $|\m S_i^{(1)}| \geq i$ for $i \in [s-M]$ and $|\m S_i^{(1)}| \geq s-1$ for $i \in [s-M+1,s]$. Then, in case that $\emptyset \in \m S_i$ for at least one $i$, the families $\m S_1,\ldots, \m S_s$ cannot be cross-dependent: we can sequentially choose distinct one-element sets from $\m S_i$'s for all $i$'s such that $\emptyset \notin \m S_i$, and empty sets for other $i$'s, what will clearly give us a cross-matching. So $\emptyset \notin \m S_i$ for all $i \in [s]$.

Suppose that $|\bigcup_{i=1}^s \m S_i^{(1)}| \geq s$. Then Hall's lemma easily implies that we can choose distinct one-element sets from $\m S_i$'s forming a cross-matching. Consequently, there exist $s-1$ elements $(j_1, a_1), ..., (j_{s-1},a_{s-1}) \in [k]\times[n]$ such that $\m S_i^{(1)} = \{\{(j_1, a_1)\}, ..., \{(j_{s-1},a_{s-1})\}\}$ for all $i\in [s-M+1, s]$ and $\m S_i^{(1)} \subset \{\{(j_1, a_1)\}, ..., \{(j_{s-1},a_{s-1})\}\}$ for all $i\in [s-M]$. Then we w.l.o.g. assume that no set from  $\Sf_i^{(2)}$ for $i \in [s-M+1, s]$ contains  $(j_t,a_t)$, $t\in [s-1]$. Indeed, this does not affect the families $\m F_i[\m S_i]$, $i\in[s-M+1,s]$. Suppose that still for some $\ell \in [s-M+1, s]$ we have $\Sf_\ell^{(2)} \neq \emptyset$. Then we can sequentially choose distinct one-element sets from all $\Sf_i$ except $i = \ell$, and then add an element from $\Sf_\ell^{(2)}$. Since all one-element sets are of the form $\{(j_t,a_t)\}$, $t\in[s-1],$ and the element from $\Sf_\ell^{(2)}$ contains none of these elements, we obtain a cross matching. So for all $i\in [s-M+1, s]$ we have $\Sf_i^{(2)} = \emptyset$. Now suppose that for some $t, u \in [s-1]$ we have $j_t \neq j_u$. Then for $i\in [s-M+1,s]$ $|\ff_i| \leq |\m A[\Sf^{(1)}_i]| + |\F_i'| \leq (s-1)n^{k-1} - n^{k-2} + \frac{n^{k-2}}{2s} < (s-1)n^{k-1}$ for sufficiently large $s$. We conclude that all $j_t$ are equal, and we may assume that $j_t = 1$ and $a_t = t$.

The argument above implies that for all $i\in [s-M+1,s]$ we have $\F_i \subset [s-1]\times[n]^{k-1}\cup \F_i'$. Thus, $|\ff_i\backslash [s-1]\times[n]^{k-1}|\leq |\ff_i'|\le \frac{n^{k-2}}{2s}$. Since $|\F_i| > (s-1)n^{k-1}$, we have $|[s-1]\times[n]^{k-1}\backslash \ff_i| < |\ff_i\backslash [s-1]\times[n]^{k-1}|$. In particular, for any $t \in [s-1]$ we have $|\m H_1(t)\backslash \ff_i| < \frac{n^{k-2}}{2s}$, and so $|\m H_1(t)\cap \ff_i| \geq \big(1 - \frac{1}{2ns}\big)n^{k-1}$. This gives us condition \eqref{assconc}, and we have treated this situation in the previous case.

\vspace{2mm}

\textbf{Case 3:} $\Pp[|\bigcup_{i = s-M+1}^s \F_i \cap \M| > s-1] \geq \frac{1}{4sn^2}$ and $n \leq s^5$.

In this case $\mathbb{P}[(r_\mathcal{M} < M)] \geq \frac{1}{4sn^2} \geq \frac{1}{4 s^{11}}$, and inequality \eqref{eq1} together with Claim~\ref{clap} imply that

$$0 < -(C/2 - 3.7)\sqrt{s\log s}\,\,\frac{1}{4 s^{11}} + 4s^{-\frac{C^2}{20}+1}(xM + (C/2 - 3.7)\sqrt{s\log s}).$$

For $C \geq 20$, this gives (for some constant $A>0$):

$$s^{-10,5} \sqrt{\log s} < A s^{-\frac{C^2}{20}+2} \log s = A s^{-18} \log s,$$
which is clearly violated for sufficiently large $s$. This completes the proof of the theorem.
\end{proof}

Now we shall treat the easy case of large $n$.

\begin{thm}\label{st5}
    The sequence $\{\min(s-1, i)n^{k-1}\}_i$ is satisfying for $n \geq k^2 s^2$.
\end{thm}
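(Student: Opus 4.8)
The plan is to prove this by induction on $k$, reducing $[n]^k$ to $[n]^{k-1}$ one coordinate at a time.

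The base case $k=1$ is Hall's theorem: if $\F_i\subset[n]$ are sorted by size with $|\F_i|\ge\min(s-1,i)+1$, then for any set $I$ of $j$ families the one of largest index $m\ge j$ has $|\F_m|\ge\min(s-1,m)+1\ge j$, so Hall's condition holds and a system of distinct representatives exists. (Only $n\ge s$ is needed here, which follows from $n\ge k^2s^2$.)

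For the inductive step, given $\F_1,\dots,\F_s\subset[n]^k$ with $|\F_i|>\min(s-1,i)n^{k-1}$, set $\F_i^{(a)}:=\{(x_2,\dots,x_k):(a,x_2,\dots,x_k)\in\F_i\}\subset[n]^{k-1}$ for $a\in[n]$, and call $a$ \emph{$i$-good} if $|\F_i^{(a)}|>\min(s-1,i)n^{k-2}$. If one can choose \emph{distinct} values $a_1,\dots,a_s$ with each $a_i$ being $i$-good, then the families $\F_i^{(a_i)}\subset[n]^{k-1}$ satisfy the hypothesis in dimension $k-1$ after re-sorting — since for $i\ge j$ we have $|\F_i^{(a_i)}|>\min(s-1,j)n^{k-2}$, at least $s-j+1$ of these sizes exceed $\min(s-1,j)n^{k-2}$, so the $j$-th smallest does — and as $n\ge k^2s^2\ge(k-1)^2s^2$ the inductive hypothesis produces pairwise disjoint $\vec y_i\in\F_i^{(a_i)}$; then $F_i:=(a_i,\vec y_i)\in\F_i$ are pairwise disjoint (distinct first coordinates, disjoint tails), i.e.\ a rainbow matching. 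Note that this reduction is \emph{lossless}, and that $\sum_a|\F_i^{(a)}|=|\F_i|>\min(s-1,i)n^{k-1}$ guarantees every $\F_i$ has at least one good value. So the task reduces to finding a system of distinct good values, i.e.\ an SDR in the bipartite "goodness" graph.

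The hard part is that Hall's condition for this SDR may fail: there can be a set $I$ of families whose good values all lie in a set $G\subset[n]$ with $|G|<|I|$. In that case one computes that each $\F_i$ with $i\in I$ has $|\F_i\cap(G\times[n]^{k-1})|>|G|\min(s-1,i)n^{k-2}$, so the families in $I$ concentrate heavily on the few parallel hyperplanes $\mathcal H_1(a)$, $a\in G$. To handle this I would peel off such concentrated families: since a rainbow matching can use at most $|G|<|I|$ first coordinates from $G$, some family of $I$ must take its first coordinate outside $G$; the role of the hypothesis $n\ge k^2s^2$ is that there is then room to pick such a value \emph{generically}, i.e.\ outside the $O(s)$ heavy hyperplanes of each remaining family in every one of the $k$ directions, after which one removes that family (decreasing $s$) or, if the concentration persists in all $k$ directions, restricts to a sub-box (decreasing $k$). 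The delicate points I expect are showing that this peeling terminates, that the accumulated deficiencies never exceed the slack built into $n\ge k^2s^2$, and that one can always select a coordinate direction avoiding the worst concentration; the base case is robust enough to absorb a loss of almost $1$ in each threshold, which is what should make the bookkeeping close up.
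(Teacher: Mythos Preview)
Your inductive framework and the SDR-of-good-slices reduction in the first two paragraphs are correct, but they handle only the easy case. The third paragraph, which must carry the proof when Hall's condition fails on the goodness graph, is a sketch rather than an argument, and the gap is real. Suppose $I\subset[s]$ violates Hall with neighbourhood $G$, $|G|<|I|$. It is true that $\F_m$ with $m=\max I$ must spill outside $G\times[n]^{k-1}$, since $|G|\le\min(s-1,m)$ and hence $|\F_m|>|G|\,n^{k-1}$. But once you fix some $F_m\in\F_m$ with first coordinate outside $G$ and restrict the remaining $\F_i$ to sets disjoint from $F_m$, each $\F_i$ loses up to $\sum_\ell|\F_i\cap\m H_\ell((F_m)_\ell)|$ elements; without control on these hyperplane densities this loss can be of order $kn^{k-1}$, wiping out $\F_i$ for small $i$. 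Your proposed fix --- choose $F_m$ ``generically'', avoiding the $O(s)$ heavy hyperplanes of every remaining family in every direction --- requires first knowing that each $\F_i$ has only $O(s)$ heavy hyperplanes per direction, and then finding such an $F_m$ \emph{inside} $\F_m$; neither step is supplied, and there is no reason the set $\F_m\setminus(G\times[n]^{k-1})$, which can consist of a single element, contains anything generic. The alternative you mention (``restrict to a sub-box, decreasing $k$'') is not developed at all.

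The paper takes a direct route with no induction on $k$. Its key observation, in the spirit of Erd\H os and Huang--Loh--Sudakov, is a hyperplane-completion lemma: if $|\m H_j(a)\cap\F_i|>(s-1)(k-1)n^{k-2}$, then adjoining all of $\m H_j(a)$ to $\F_i$ preserves cross-dependence. Iterating, each $\F_i$ becomes a union of $t_i$ full hyperplanes together with a remainder $\F_i'$ that meets every hyperplane in at most $(s-1)(k-1)n^{k-2}$ sets. Then either $t_i\ge\min(s-1,i)$, or $|\F_i'|\ge n^{k-1}\ge k^2s^2n^{k-2}$. A reverse-greedy selection of $F_s,F_{s-1},\dots,F_1$, each avoiding the previously chosen sets and the designated hyperplanes of the other families, now succeeds: the uniform thinness of the remainders and $n\ge k^2s^2$ provide the room. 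This is exactly the heavy-hyperplane structure you were reaching for in your third paragraph, but the completion lemma is what makes the reduction clean and the counting close.
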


\begin{proof}

The argument is a version of the original Erd\H os's argument \cite{E} to prove Erd\H os Matching Conjecture for $n>n_0(s,k)$, as it is given in \cite{HLS}. It is based on the following claim.

\begin{cla}
    If families $\F_1, ..., \F_s$ are cross-dependent and for some $i \in [s]$, $j\in [k]$ and $a\in[n]$ we have $|\m H_j(a)\cap \F_i| > (s-1)(k-1)n^{k-2}$, then the families $\F_1, ...,\F_i\cup \m H_j(a),...,\F_s$ are also cross-dependent.
\end{cla}

\begin{proof}
    Arguing indirectly, assume that $\F_1, ...,\F_i\cup \m H_j(a),...,\F_s$ contain a cross-matching $F_1\in \ff_1,..,F_s\in \ff_s$. If $F_i \notin \m H_j(a)\backslash \F_i$, then it is a cross-matching in the initial families, a contradiction. Thus $F_i \in \m H_j(a)\backslash \F_i$ and so $F_\ell \notin \m H_j(a)$ for all $\ell \neq i$. We have $|\m H_j(a)\cap \{X: X\cap F_\ell \neq \emptyset\}| \leq (k-1)n^{k-2}$ for all $\ell \neq i$. Consequently, $|\m H_j(a)\cap \bigcup_{\ell\in [k]\setminus\{i\}}\{X: X\cap F_\ell \neq \emptyset\}| \leq (s-1)(k-1)n^{k-2} < |\m H_j(a)\cap \F_i|$, and hence there is some $F_i' \in \m H_j(a)\cap \F_i$ that is disjoint from all $F_\ell$, $\ell \neq i$, and thus $F_1,...,F_i',...,F_s$ is a cross-matching in $\F_1, ..., \F_s$, a contradiction.
\end{proof}

    By the claim, we can w.l.o.g. assume that $\F_i = \hat{\F_i}\sqcup\F_i'$, where $\hat{\F_i}$ is a union of $t_i$ hyperplanes and $\F_i'$ satisfies the following for all $j\in [k]$ and $a\in[n]$: $$|\m H_j(a)\cap \F_i'| \leq (s-1)(k-1)n^{k-2}.$$

For each $i < s$, we either have $t_i \geq i$ or $$|\ff_i'| \geq |\ff_i|-t_in^{k-1}\ge n^{k-1} \geq s^2k^2 n^{k-2}.$$ Similarly, for $i=s$ we either  have $t_s \geq s-1$ or $|\ff_s'| \geq n^{k-1} \geq s^2 k^2 n^{k-2}$.

Let $i_1 < ... < i_m < s$ be the indices such that  $t_{i_j} \geq i_j$, $j\in[m]$. For each $j\in [m]$ we can choose a hyperplane $\m H_{i_j}\subset \ff_{i_j}$ and, moreover, all these hyperplanes are different. We have $|\bigcup_{j\in[m]} \m H_{i_j}| \leq (s-1)n^{k-1}$, and since $|\F_s| > (s-1)n^{k-1}$, the family $\F_s$ contains a set $F_s$ that is not contained in any of $\{\m H_{i_j}\}_j$. Let us show that we can sequentially choose $F_{s-1}, F_{s-2},...,F_{1}$ such that, first, $F_i \cap F_j = \emptyset$ for all $j > i$ and, second, $F_i \notin \m H_{i_j}$ for all $j$ such that $i_j \neq i$. Indeed, assume first that $i \notin \{i_1,...,i_m\}$. Then $|\F'_i| \geq s^2 k^2 n^{k-2}$. Each hyperplane contains at most $(s-1)(k-1)n^{k-2}$ elements of $\F_i'$ and thus any set intersects at most $k(s-1)(k-1)n^{k-2}$ elements of it. Thus, $\F'_i$ contains at most $(s-1)^2(k-1)n^{k-2} + (s-1)^2k(k-1)n^{k-2} \leq s^2k^2 n^{k-2}$ elements that do not satisfy the desired conditions. Consequently, there is at least one that suits. If $i = i_j$ for some $j\in[m]$, then each $\m H_{i_\ell}$ with $\ell \neq j$ intersects $\m H_{i_j}$ in at most $n^{k-2}$ sets. By construction, the sets $F_{i+1}, ..., F_s$ do not lie in $\m H_{i_j}$, and thus it contains at most $ksn^{k-2}$ elements that intersect one of $F_{i+1}, ..., F_s$. Since $|\m H_{i_j}| = n^{k-1} > s n^{k-2} + ksn^{k-2}$, there is a set $F_{i_j}\in \m H_{i_j} \subset \F_{i_j}$ that satisfies both conditions. Clearly, $F_1, ..., F_s$ is a cross-matching.
\end{proof}

\section{Nullstellensatz: proof of Theorem~\ref{thmmain4}}\label{sec6}

In this section, we shall use Alon's Combinatorial Nulstellensatz to prove Theorem~\ref{thmmain4}. Recall the statement of the Combinatorial Nulstellensatz:

\begin{thm}[Alon \cite{Al}]\label{thmnull}
    Let $f(x_1,...,x_n)$ be a polynomial over a field $\mathbb{F}$ of degree $k$. Suppose that the coefficient of a monomial $x_1^{k_1}x_2^{k_2}...x_n^{k_n}$ in $f$ is nonzero and $k_1 + ... + k_n = k$. Then for any $A_1, ..., A_n \subset \mathbb{F}$, such that $|A_i| > k_i$, there exist $a_1 \in A_1$, ..., $a_n \in A_n$ such that $f(a_1, ..., a_n) \neq 0$.
\end{thm}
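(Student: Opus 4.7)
The plan is to apply Combinatorial Nullstellensatz (Theorem~\ref{thmnull}) to an auxiliary polynomial in $2s$ variables $x_1,\ldots,x_s,y_1,\ldots,y_s$ over $\mathbb F_p=[p]$ designed so that non-vanishing of the polynomial encodes a rainbow matching: if $F(\vec a,\vec b)\ne 0$, then $(a_i,b_i)\in\mathcal F_i$ with the $a_i$'s pairwise distinct and the $b_i$'s pairwise distinct.

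For each $i$, use the bound $|\mathcal F_i|>pf_i$ to produce a nonzero polynomial $\phi_i\in\mathbb F_p[x,y]$ with $\deg_x\phi_i\le p-1-f_i$, $\deg_y\phi_i\le p-1$, that vanishes on $[p]^2\setminus\mathcal F_i$. Such $\phi_i$ exists by a dimension count: the space of admissible polynomials has dimension $(p-f_i)p$, whereas the number of vanishing conditions $|[p]^2\setminus\mathcal F_i|<p(p-f_i)$ is strictly smaller. Thus $\phi_i(a,b)\ne 0$ implies $(a,b)\in\mathcal F_i$. Consider
$$F(\vec x,\vec y) \;=\; \prod_{i=1}^{s}\phi_i(x_i,y_i)\cdot\prod_{1\le i<j\le s}(x_j-x_i)^{2}\cdot\prod_{1\le i<j\le s}(y_j-y_i).$$
Reduce $F$ modulo $x_i^p-x_i$ and $y_i^p-y_i$ (which does not affect values on $\mathbb F_p^{2s}$) to obtain $\bar F$ of partial degrees at most $p-1$; by the partial-degree form of the Nullstellensatz it suffices to show that $\bar F$ is not identically zero.

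The key computation is to evaluate $[\prod_i x_i^{p-1}y_i^{p-1}]\bar F$, which, for $s<p$, coincides with the same coefficient in $F$. Expanding $\prod_{i<j}(x_j-x_i)^{2}=\sum_{\vec e}C_{\vec e}\prod_i x_i^{e_i}$ and $\prod_{i<j}(y_j-y_i)=\sum_\tau \mathrm{sgn}(\tau)\prod_i y_i^{\tau(i)-1}$, and using that $\deg_x\phi_i\le p-1-f_i$ forces $e_i\ge f_i$, the constraint $\sum_i e_i = s(s-1)=\sum_i f_i$ pins down $\vec e=\vec f$. The coefficient then factors as
$$C_{\vec f}\,\cdot\,\det\Bigl([x^{p-1-f_i}\,y^{p-j}]\,\phi_i\Bigr)_{i,j=1}^{s},\qquad C_{\vec f}=[x_1^{f_1}\cdots x_s^{f_s}]\prod_{i<j}(x_j-x_i)^{2},$$
and $C_{\vec f}$ is nonzero by hypothesis.

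The last step is to choose the $\phi_i$'s so that the residual $s\times s$ determinant is nonzero, and this is where I expect the main difficulty. In generic cases, when the space $W_i$ of admissible $\phi_i$'s has ample dimension, the $i$-th row $([x^{p-1-f_i}y^{p-j}]\phi_i)_{j=1}^s$ spans a large subspace of $\mathbb F_p^s$ and one can select a transversal basis; the hard case is the tight regime $|\mathcal F_i|=pf_i+1$, where $\dim W_i=1$ and the unique (up to scalar) $\phi_i$ might have zero leading block. Handling this tight regime requires a structural argument connecting the minimal vanishing polynomial of $[p]^2\setminus\mathcal F_i$ to the hypothesis $C_{\vec f}\ne 0$. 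Granting nonvanishing of the determinant, Nullstellensatz produces $(\vec a,\vec b)\in\mathbb F_p^{2s}$ with $F(\vec a,\vec b)\ne 0$, and hence the desired rainbow matching.
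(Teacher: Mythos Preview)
Your proposal does not address the stated theorem. Theorem~\ref{thmnull} is Alon's Combinatorial Nullstellensatz, which the paper merely \emph{cites} from \cite{Al} without proof; there is nothing in the paper to compare against, and your write-up does not attempt to prove it either. What you have written is instead a proof sketch for Theorem~\ref{thmmain4} (the rainbow-matching statement that \emph{uses} Theorem~\ref{thmnull}).

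Viewed as a proposal for Theorem~\ref{thmmain4}, your approach is genuinely different from the paper's and has a real gap. The paper passes to the quadratic extension $\mathbb F_{p^2}=\mathbb Z_p(\alpha)$ and encodes a pair $(i,j)\in[p]^2$ as the single field element $i+\alpha j$; the sets $A_i$ in the Nullstellensatz are then literally $\varphi(\mathcal F_i)$, so no auxiliary ``indicator'' polynomials $\phi_i$ are needed. Disjointness is enforced by the $s$-variable polynomial $P(x_1,\ldots,x_s)=\prod_{i<j}\prod_{q\in\mathbb Z_p}((x_j-x_i)^2-q)$, whose top-degree part is $\prod_{i<j}(x_j^p-x_i^p)^2$, so the relevant coefficient is exactly the hypothesised one. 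This sidesteps entirely the determinant you are left with.

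Your $2s$-variable approach over $\mathbb F_p$ hinges on choosing $\phi_i$ so that $\det\big([x^{p-1-f_i}y^{p-j}]\phi_i\big)_{i,j}\ne 0$, and you explicitly concede that in the tight case $|\mathcal F_i|=pf_i+1$ the space of admissible $\phi_i$ is one-dimensional and the required leading block may vanish. This is not a technicality: nothing in the hypothesis $C_{\vec f}\ne 0$ constrains the top $y$-coefficients of the (essentially unique) $\phi_i$, so the determinant can genuinely be zero for particular families. Without a mechanism to avoid this, the argument is incomplete. The paper's trick of moving to $\mathbb F_{p^2}$ is precisely what eliminates this obstruction.
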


\begin{proof}[Proof of Theorem~\ref{thmmain4}]
    Let $\mathbb{F} = \mathbb{Z}_p(\alpha)$ be an extension of $\mathbb{Z}_p$, where $\alpha$ is a root of $x^2 = a$ with $a$ being not a quadratic residue.
Note that the map $\varphi: [p]^2 \to \mathbb{F}$, $\varphi: (i,j) \mapsto i + \alpha j$ is a bijection.

\begin{cla}\label{clazp}
    $(\varphi((i_1,j_1)) - \varphi((i_2,j_2)))^2 \in \mathbb{Z}_p$ iff $i_1 = i_2$ or $j_1 = j_2$.
\end{cla}

\begin{proof}
    $(\varphi((i_1,j_1)) - \varphi((i_2,j_2)))^2 = ((i_1-i_2) + \alpha (j_1 - j_2))^2 = ((i_1-i_2)^2 + a(j_1-j_2)^2) + 2 \alpha (i_1-i_2)(j_1 - j_2)$. This lies in $\Z_p$ if and only if $(i_1-i_2)(j_1 - j_2) = 0$.
\end{proof}

Consider a polynomial over $\mathbb{F}$:

\begin{equation}\label{eqp} P(x_1, ..., x_s) = \prod_{1\leq i<j \leq s} \prod_{q \in \mathbb{Z}_p} ((x_j - x_i)^2 - q).\end{equation}
The degree of $P$ is $p s (s-1)$ and all monomials of maximal degree are included in the term $\prod_{1\leq i<j \leq s} (x_j - x_i)^{2p} = \prod_{1\leq i<j \leq s} (x_j^p - x_i^p)^{2}$.

Take any bipartite graphs $\mathcal{F}_1, ..., \mathcal{F}_s \subset [p]^2$, such that $|\mathcal{F}_i| > p f_i$. We shall show that they contain a cross-matching. Note that $|\varphi(\mathcal{F}_i)| = |\mathcal{F}_i| > p f_i$.

The coefficient of $x_1^{f_1},..., x_s^{f_s}$ in $\prod_{1\leq i<j \leq s} (x_j - x_i)^{2}$ is equal to the coefficient of $x_1^{pf_1},..., x_s^{pf_s}$ in $\prod_{1\leq i<j \leq s} (x_j^p - x_i^p)^{2}$, and thus the latter is non-zero in $\mathbb{F}$. By Theorem~\ref{thmnull}, there exist $X_1 \in \mathcal{F}_1$, ..., $X_s \in \mathcal{F}_s$, such that $P(\varphi(X_1), ..., \varphi(X_s)) \neq 0$. Looking at \eqref{eqp}, we see that this means that for every pair of indexes $i < j$ we have $(\varphi(X_j) - \varphi(X_i))^2 \notin \Z_p$. By Claim~\ref{clazp}, it implies  that $X_i$ and $X_j$ do not intersect for $i\ne j$. Hence, $X_1,...,X_s$ is the desired cross-matching.
\end{proof}
\section{Discussion}
In this paper, we focused on two types of sequences $f_1,\ldots, f_s$. The first type is an arithmetic progression $f_i = m+(i-1)d$, and the second type is a `truncated' arithmetic progression $f_i = \min\{m+(i-1)d, (s-1)n^{k-1}\}.$ For both types of sequences, we had $d = n^{k-1},$ which appears to be the right value to take. The optimal value of $m= m(n,s,k)$ would be very interesting to determine.
\begin{pro}
    Determine the smallest $m = m(n,s,k)$ such that the sequence $f_i = m+(i-1)n^{k-1}$ is satisfying.
\end{pro}
\begin{pro}
    Determine the smallest $m = m(n,s,k)$ such that the sequence $f_i = \min\{m+(i-1)n^{k-1}, (s-1)n^{k-1}\}$ is satisfying.
\end{pro}
On the other hand, the sequences from examples in Section~\ref{sec2} all have the form $f_1 = \ldots = f_\ell = f,$ $f_{\ell+1} = \ldots = f_s = g.$ Thus, the following problem is natural.
\begin{pro}
    Determine the minimal values $f = f(n,k,s,\ell)$ and $g = f(n,k,s,\ell)$ such that the sequence $f_1 = \ldots = f_\ell = f,$ $f_{\ell+1} = \ldots = f_s = g$ is satisfying.
\end{pro}
The case $g = (s-1)n^{k-1}$ is of particular interest.
\begin{pro}
    Determine the minimal value $f = f(n,k,s,\ell)$  such that the sequence $f_1 = \ldots = f_\ell = f,$ $f_{\ell+1} = \ldots = f_s = (s-1)n^{k-1}$ is satisfying.
\end{pro}
It is not unlikely that the examples from Section~\ref{sec2} are best possible.

Satisfying sequences for $k=1$ are easy to describe: it is necessary and sufficient that $f_i\ge i-1$. (We of course suppose that $f_1\le f_2\le\ldots \le f_s$.) The problem for the graph case $k=2$ and arbitrary sequences seems to be non-trivial and very interesting.
\begin{pro}
    Describe the set of  all satisfying sequences $f_1,\ldots, f_s$ for $k=2$ and arbitrary $s$.
\end{pro}
If it is resolved, it should give good insight on satisfying sequences for arbitrary $k$ and sufficiently large $n$.

Finally, it would be very interesting (although much harder) to treat analogous questions for families $\ff_1,\ldots, \ff_s\subset {[n]\choose k}$.

\section{Acknowledgements} This research was supported by the grant of the Russian Science Foundation (RScF) No. 24-71-10021.


\begin{thebibliography}{10}
\bibitem{AH}  R. Aharoni and D. Howard, {\it A rainbow $k$-partite version of the Erd\H os--Ko--Rado Theorem},
Comb. Probab. Comput. 26 (2017), N3, 321--337.

\bibitem{Al} N, Alon, {\it Combinatorial nullstellensatz}, Comb. Probab. Comput. 8 (1999), NN1-2, 7--29.

\bibitem{AS} N. Alon and J. H. Spencer, {\it The Probabilistic Method}, Fourth Edition, 2015.

\bibitem{Alw} R. Alweiss, S. Lovett, K. Wu, and J. Zhang, {\it Improved bounds for the sunflower lemma}, arXiv:1908.08483 (2019)

\bibitem{HLS} H. Huang, P.-S. Loh  and B. Sudakov, {\it The Size of a Hypergraph and its Matching Number}, Comb. Probab. Comput. 21 (2012), N3, 442--450.

\bibitem{E} P. Erd\H os, \textit{A problem on independent r-tuples}, Ann. Univ. Sci. Budapest. 8 (1965) 93--95.

\bibitem{F4} P. Frankl, \textit{Improved bounds for Erd\H os' Matching Conjecture}, J. Comb. Theory Ser. A 120 (2013), 1068--1072.

\bibitem{FK21} P. Frankl, A. Kupavskii,  {\it The Erd\H os Matching Conjecture and Concentration Inequalities}, Journal of Comb. Theory Ser B. 157 (2022), 366--400.

\bibitem{KKconc} S. Kiselev, A. Kupavskii, {\it Rainbow matchings in $k$-partite hypergraphs}, Bulletin of the London Math. Society 53 (2021), N2, 360--369.

\bibitem{KK} D. Kolupaev, A. Kupavskii, {\it Erd\H os Matching Conjecture for almost perfect matchings}, Discrete Math. 346 (2023), N4.

\bibitem{Kup54} A. Kupavskii, {\it Erd\H os--Ko--Rado type results for partitions via spread approximations} (2023), arXiv.2309.00097.


\bibitem{Kup55} A. Kupavskii, {\it Intersection theorems for uniform subfamilies of hereditary families} (2023), arXiv.2311.02246.

\bibitem{Kup56} A. Kupavskii, {\it An almost complete t-intersection theorem for permutations} (2024), arXiv:2405.07843


\bibitem{KN} A. Kupavskii, F. Noskov, {\it Linear dependencies, polynomial factors in the Duke--Erd\H os forbidden sunflower problem} (2024), arXiv:2410.06156

\bibitem{KuZa} A. Kupavskii and D. Zakharov, {\it Spread approximations for forbidden intersections
problems}, to appear in Advances in Mathematics, available at arxiv:2203.13379

\bibitem{Sto} M. Stoeckl, {\it Lecture notes on recent improvements for the sunflower lemma}
\url{https://mstoeckl.com/notes/research/sunflower_notes.html }

\bibitem{Tao} T. Tao,{\it The sunflower lemma via shannon entropy}, \url{https://terrytao.wordpress.com/2020/07/20/the-sunflower-lemma-via-shannon-entropy/}

\end{thebibliography}
\end{document}